\newtheorem{theorem}{Theorem}[section]
\newtheorem{lemma}[theorem]{Lemma}
\newtheorem{proposition}[theorem]{Proposition}
\newtheorem*{theorem*}{Theorem}
\theoremstyle{definition}
\newtheorem{definition}[theorem]{Definition}
\newtheorem{notation}[theorem]{Notation}
\newtheorem{remark}[theorem]{Remark}
\DeclarePairedDelimiter\norm{\lVert}{\rVert}
\title{Admissibility over Semi-Global Fields in the Bad Characteristic Case}
 \author{Yael Davidov}
\begin{document}
\maketitle

\begin{abstract}
A finite group $G$ is said to be admissible over a field $F$ if there exists a division algebra $D$ central over $F$ with a maximal subfield $L$ such that $L/F$ is Galois with group $G$. In this paper we give a complete characterization of admissible groups over function fields of curves over equicharacteristic complete discretely valued fields with algebraically closed residue fields, such as the field $\overline{\mathbb{F}_P}((t))(x)$.\\
\end{abstract}

\section{Introduction}
Admissibility was first studied over global fields by Schacher \parencite{SCHACHER1968451}. He found that the $q$-Sylow subgroups of an admissible group over a characteristic $p$ global field had to be metacyclic (i.e. the extension of a cyclic group by another cyclic group) for all primes $q\neq p$. Sufficient conditions for admissibility over global fields is still an open question, with results for solvable groups given by Sonn \parencite{sonn1983q}. Admissibility over other types of fields have been studied by Harbater, Hartmann and Krashen \parencite{harbater2011patching}, Suresh and Reddy \parencite{reddy2013admissibility} and Neftin and Paran \parencite{neftin2010patching}.\\

In 2011 Harbater, Hartmann, and Krashen used field patching techniques to study admissibility over semi-global fields \parencite{harbater2011patching}. A semi-global field $F$ is the function field of a curve over a complete discretely valued field $K$. Harbater, Hartmann, and Krashen showed that if $G$ is admissible over a semi-global field $F$, and the residue field of $K$, $k$, is algebraically closed, then the $q$-Sylow subgroups of $G$ must be abelian of rank at most 2 for every $q\neq p:=\text{char}(k)$. As in the global field case, it was not clear what restrictions, if any, should be placed on the $p$-Sylow subgroups.\\

The main contribution of this work is to show that, in the equicharacteristic $p$ case, the structure of the $p$-Sylow subgroup of a group $G$ does not impact the admissibility of $G$ over $F$. With this result in hand we can completely characterize admissible groups over a new class of semi-global fields.

\begin{theorem*}[Theorem \ref{mainresult}]
Let $K$ be a complete discretely valued field with algebraically closed residue field $k$, and let F be a finitely generated field extension of K with transcendence degree 1. Assume further that $\text{char}(K)=\text{char}(k)$. A finite group $G$ is admissible over $F$ if and only if the $q$-Sylow subgroups of $G$ are abelian of rank at most 2 for every prime $q\neq \text{char}(k)$. 
\end{theorem*}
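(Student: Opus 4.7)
The ``only if'' direction is exactly the theorem of Harbater, Hartmann, and Krashen \parencite{harbater2011patching}, so I focus on sufficiency: assuming the $q$-Sylow subgroups of $G$ are abelian of rank at most $2$ for every $q \neq p := \operatorname{char}(k)$, I need to construct a central division algebra $D/F$ with a Galois maximal subfield $L/F$ of group $G$. The approach is HHK-style patching. Fix a regular projective $\mathcal{O}_K$-model $\mathcal{X}$ of $F$ with reduced closed fiber $X$, and choose a finite set $\mathcal{P} \subset X$ of closed points containing the singularities, large enough that every component of $X \setminus \mathcal{P}$ is affine. The associated overfields $\{F_U, F_P, F_\wp\}$ carry HHK's local-global principles for Galois covers and for Brauer classes, reducing the admissibility problem to constructing, at each overfield, a $G$-Galois crossed-product division algebra whose Brauer classes agree on branches.

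The prime-to-$p$ part is handled essentially as in HHK: under the abelian-rank-$\leq 2$ Sylow condition, locally one builds a crossed product from tensor products of at most two symbol algebras to realize the prime-to-$p$ structure. The genuinely new input is for the $p$-Sylow $P \leq G$, and here equicharacteristic $p$ is crucial. Because each local overfield has characteristic $p$ and residue field containing $k$ (algebraically closed of characteristic $p$), Artin-Schreier-Witt theory realizes the arbitrary $p$-group $P$ as a Galois group over it, and cyclic $p$-algebras generated by Artin-Schreier elements produce division algebras of any prescribed $p$-power index containing a chosen $P$-extension as a maximal subfield. Tensoring the $p$-primary and prime-to-$p$ local constructions then yields a $G$-crossed product division algebra over each overfield.

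The main technical obstacle is compatibility across the patching: the Artin-Schreier-Witt cocycles defining the $p$-primary local algebras must match on the branch fields $F_\wp$, and the globally patched Galois $G$-algebra must be a field, not just a split product of fields. I expect to overcome this by a genericity argument, choosing the Artin-Schreier-Witt parameters within a sufficiently dense subset of the cocycle groups to ensure both irreducibility of the global cover and the correct index of the patched division algebra. A secondary point is verifying that HHK's local-global principle for the Brauer group operates on the $p$-primary part in equal characteristic; this should follow from the fact that $\operatorname{Br}(F)\{p\}$ is generated by Artin-Schreier-Witt cyclic classes in this setting, so no wild obstructions arise.
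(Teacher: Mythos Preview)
Your outline captures the broad HHK shape correctly, but the handling of the $p$-part glosses over exactly the difficulty that drives most of the paper, and the proposed resolution (``a genericity argument on Artin--Schreier--Witt parameters'') would not close the gap as stated.

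The crucial asymmetry with the tame case is this: for $q\neq p$ one can arrange that the local $H_q$-adequate extension over $F_P$ actually \emph{splits} over the branch field $F_\wp$ (Lemma~\ref{sylowblocks}), so the induced $G$-Galois algebra becomes the trivial split algebra on the branch and compatibility with adjacent patches is automatic. For the $p$-Sylow piece no such splitting is available in general. What one can obtain (Proposition~\ref{specialcase}, via Lemmas~\ref{unramified_branches}, \ref{split_branches}, \ref{R_structure} and the coordinate-change argument of Proposition~\ref{rotationprop}) is only that the $H_1$-extension $E_P/F_P$ is \emph{unramified} at a chosen height-one prime $\wp$ and that $D_P$ splits over $F_\wp$. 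Unramified but non-split means $E_P\otimes_{F_P}F_\wp$ is a nontrivial $H_1$-Galois $F_\wp$-algebra, so you cannot simply place trivial data on the neighboring open patch $U$ and have the pieces agree on $F_\wp$.

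The paper resolves this not by a generic choice of cocycle but by \emph{changing the model}: one blows up the original model at the point $P_1$ and works on the exceptional $\mathbb{P}^1$. The local $p$-data now lives at a smooth point $P$ of the exceptional fiber, and one must propagate it across the intermediate open $U\subset\mathbb{P}^1$ and through the node $P'_1$ back to the rest of $X$. This propagation is the content of Lemmas~\ref{compatibility1} and~\ref{compatibility2}: Saltman's generic Galois extension $S(G)/R(G)$ (Theorem~\ref{genericgalois}) together with the Krasner-type approximation of Lemma~\ref{krasners} lets one lift the residual $H_1$-Galois algebra from $k(\wp)$ to $\widehat{R_U}$, and then (using simultaneous density of $F_{P'_1}$ in two branch fields) build an $H_1$-Galois algebra over $F_{P'_1}$ that matches $E_U$ on the $U$-side and is \emph{split} on the $U'$-side. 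Only at this last branch does one recover triviality, and from there the remainder of the closed fiber carries trivial data. None of this is a genericity or density argument on the branch cocycles themselves; it is a constructive approximation using the specific structure of $S(G)$.

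Two smaller points: the local admissibility of an arbitrary $p$-group over $F_P$ is not obtained directly from Artin--Schreier--Witt theory for $P$, but by first treating the cyclic case (Lemma~\ref{cyclicad}) and then invoking Saltman's theorem that a $p$-power division algebra with a cyclic $C_{p^m}$-maximal subfield also has a $G$-Galois maximal subfield for any $G$ of order $p^m$. And the final argument does not use a local--global principle for $\mathrm{Br}(F)$; it uses the patching equivalence of categories (Theorem~\ref{patchingequivalence}) and an index computation over the special points to force the patched algebra to be division.
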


This result builds on the work of Guico \parencite{erneststhesis} and uses field patching techniques as used in \parencite{harbater2011patching}. One difficulty in addressing this case is that while Guico's work establishes admissibility for $p$-groups, it leaves open the question of how to patch these compatibly to obtain more general groups. An important part of our strategy is changing the patching model by blowing up and constructing compatible objects on intermediate patches using generic Galois theory.\\

In section 2 we provide a quick review of the theory of field patching which will be the primary technique used in the proof of the main result and establish some notation. Section 3 establishes the local admissibility of the $p$-Sylow subgroup, recalling results of Guico. In Section 4 we prove some technical results which establish control over the behavior of the $p$-Sylow piece on overlapping patches. In Section 5 we construct compatible structures over a sequence of patches using generic Galois theory. The proof of the main result is given in section 6.

\section{Field Patching}\label{patching}
The primary underlying framework in the proof of the main result is the field patching technique introduced by Harbater and Hartmann in 2007. We will be using a streamlined setup described in \parencite{harbater2011patching} and \parencite{harbater2015local}. Broadly speaking, field patching allows one to prove the existence of certain algebraic objects over a given field by constructing corresponding objects over a special system of field extensions in a compatible way, when such a system of overfields exists.

\begin{definition} 
Let $I$ be a finite partially ordered set with a field $F_i$ for each $i\in I$. Suppose for each $i\succ j$ there is an inclusion $\iota_{ij}:F_i\rightarrow F_j$. We call this a system of fields and denote it by $\mathscr{F}=\{F_i\}_{i\in I}$. Suppose also that $\mathcal{A}(F_i)$ is a category which we think of as consisting of ``algebraic objects of type $\mathcal{A}$ over $F_i$" such that extension of scalars $\mathcal{E}_{ij}:\mathcal{A}(F_i)\rightarrow \mathcal{A}(F_j)$ given by $A\mapsto A\otimes_{F_i}F_j$ is a functor for all $i\succ j$.
\end{definition}

\begin{remark}
An example of the type of algebraic objects that might be considered are finite dimensional vector spaces. In this case $\mathcal{A}(F_i)$ would denote the category of finite dimensional vector spaces over $F_i$. More generally, we can consider vector spaces with a fixed tensor as described by Serre in Section X.2 of \parencite{serre2013local}. For a discussion of the types of algebraic objects that one can consider when using field patching techniques see Definition 3.2.1 in \parencite{krashen2010field}.
\end{remark}

\begin{definition}
An $\mathcal{A}$-patching problem for the system of fields $\mathscr{F}=\{F_i\}_{i\in I}$ is a set of objects $\{V_i\}_{i\in I}$ such that $V_i\in\mathcal{A}(F_i)$ with isomorphisms $\phi_{ij}:V_i\otimes_{F_i}F_j\rightarrow V_j$ for all $i\succ j$. We define a morphism of $\mathcal{A}$-patching problems to be a collection of maps $\{\alpha_i\}_{i\in I}$ where $\alpha_i\in \text{Hom}_{\mathcal{A}(F_j)}(V_i,V'_i)$ such that for all $i\succ j$ in $I$ the following diagram commutes:
\[
	\begin{tikzcd}
V_i\otimes_{F_i}F_j \arrow{r}{\phi_{ij}} \arrow[swap]{d}{\alpha_i\otimes id} & V_j \arrow{d}{\alpha_j} \\
V'_i\otimes_{F_i}F_j \arrow{r}{\phi'_{ij}} & V'_j.
	\end{tikzcd}
\] 
This forms a category of $\mathcal{A}$-patching problems over $\mathscr{F}$ which we denote $PP_{\mathcal{A}}(\mathscr{F})$. If $\mathscr{F}$ is a system of fields with an inverse limit that is also a field, denoted by $F$, then we can define the base change functor $\mathcal{A}(F)\rightarrow PP_{\mathcal{A}}(\mathscr{F})$ given by $V\mapsto \{V\otimes_F F_i\}_{i\in I}$. We say that an $\mathcal{A}$-patching problem for the system $\mathscr{F}$ has a solution if there exists an object $V\in\mathcal{A}(F)$ such that the patching problem is isomorphic to the patching problem obtained from $V$ under the base change functor.
\end{definition}

We now devote our attention to constructing a system of fields $\mathscr{F}$ such that the inverse limit of the system is a semi-global field and certain $\mathcal{A}$-patching problems over $\mathscr{F}$ have solutions.

\begin{notation}\label{modelsystem}
Let $T$ denote a complete discretely valued ring with residue field $k$ and fraction field $K$. Let $F$ be a finitely generated transcendence degree 1 field extension of $K$ and let $\widehat{X}$ be a regular $T$-model of $F$, a regular, connected, projective, $T$-curve with function field $F$. Such a model exists by \parencite{lipman1975introduction}, and we denote the closed fiber of $\widehat{X}$ by $X$. 

Given a finite set of closed points of $\widehat{X}$ on $X$ containing every point where irreducible components of $X$ meet, $\mathscr{P}$, we construct a system of fields in the following way. For each point $P\in \mathscr{P}$ we let $R_P$ be the local ring of $\widehat{X}$ at $P$ and $\widehat{R_P}$ denote the completion of $R_P$ at its maximal ideal. We define $F_P$ to be the fraction field of $\widehat{R_P}$.  

Next we let $\mathscr{W}$ be the set of all connected components of $X\backslash\mathscr{P}$. For every $U\in \mathscr{W}$ we let $R_U$ be the subring of $F$ of rational functions which are regular on $U$. We let $\widehat{R_U}$ be the $t$-adic completion of $R_U$. We define $F_U$ to be the fraction field of $\widehat{R_U}$. 

Lastly, we let $\mathscr{B}$ denote the set of all branches of $X$ at points in $\mathscr{P}$. Each element $\wp\in\mathscr{B}$ corresponds to a pair $(P, U)\in \mathscr{P}\times\mathscr{W}$ where $\wp$ is the branch of $X$ at $P$ lying on $U$. By abuse of notation, we let $\wp$ also denote the corresponding height 1 prime ideal containing $t$ in $\widehat{R_P}$. We define $\widehat{R_\wp}$ to be the completion of the localization of $\widehat{R_P}$ at $\wp$ with respect to its maximal ideal, and then define $F_\wp$ to be the fraction field of $\widehat{R_\wp}$.

In this setup we have a field $F_\xi$ for every $\xi\in \mathscr{I}=\mathscr{P}\cup\mathscr{W}\cup\mathscr{B}$ and we define a partial order on $\mathscr{I}$ by $P\succ\wp$ and $U\succ \wp$ whenever $\wp$ is the branch of $X$ at $P$ lying on $U$. Given such a branch it is clear that we have an inclusion of $F_P$ in $F_\wp$. Additionally, since $P$ is in the closure of $U$, and $U$ is irreducible, the radical ideal of $(t)$ in $R_U$ which we denote $\eta$ is prime and $(R_U)_\eta$ is canonically isomorphic to $(R_P)_\mathfrak{p}$ where $\mathfrak{p}=\wp\cap R_P$. This gives an inclusion $R_U\rightarrow \widehat{R_\wp}$ and since $\widehat{R_U}$ is the $t$-adic completion of $R_U$ (and $t\in\wp$ so $\widehat{R_\wp}$ is $t$-adically complete) this implies that $\widehat{R_U}$ is contained in $\widehat{R_\wp}$ and therefore $F_U$ includes into $F_\wp$. 

We call the system of fields given by such a triple $(\widehat{X},\mathscr{I},\mathscr{F})$ a semi-global patching framework for $F$.
\end{notation}

\begin{theorem}[Theorems 6.4 and 7.1 in \parencite{harbater2010patching}, and Proposition 3.3 in \parencite{harbater2015local}]\label{patchingequivalence}
Let $F$ be a semi-global field and $(\widehat{X},\mathscr{I},\mathscr{F})$ be a semi-global patching framework for $F$. For a field extension $L$ of $F$ let $\mathcal{A}(L)$ denote any of the following categories:
\begin{enumerate}
	\item[i.] the category of central simple algebras over $L$ with algebra homomorphisms, or
	\item[ii.] the category of $G$-Galois $L$-algebras for a fixed finite group $G$ (as defined on p.84 of \parencite{de2006separable}) with $G$-equivariant algebra homomorphisms.
\end{enumerate}
Then the base change functor $\mathcal{A}(F)\rightarrow PP_\mathcal{A}(\mathscr{F})$ is an equivalence of categories. In particular, every $\mathcal{A}$-patching problem has a unique solution up to isomorphism.
\end{theorem}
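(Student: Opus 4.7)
The plan is to reduce both parts of the theorem to the fundamental patching equivalence for finite dimensional vector spaces, and then bootstrap to algebraic structures encoded as vector spaces equipped with a fixed tensor, in the sense of Section X.2 of \parencite{serre2013local} and Definition 3.2.1 of \parencite{krashen2010field}. A central simple algebra is a finite dimensional vector space together with an associative multiplication tensor, and a $G$-Galois algebra is a finite dimensional commutative algebra with a $G$-action, both expressible as vector spaces with extra tensor structure. Once the equivalence is known for vector spaces with a given tensor type, the defining closed conditions (associativity, commutativity, simplicity and centrality, or the $G$-Galois axioms) will descend from the $F_\xi$ back to $F$ by faithfully flat descent along $F\hookrightarrow \prod_\xi F_\xi$.

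The heart of the matter is a matrix factorization lemma at each branch $\wp=(P,U)\in\mathscr{B}$: for every $n\geq 1$, every $A\in GL_n(F_\wp)$ should decompose as a product $A_P\cdot A_U$ with $A_P\in GL_n(F_P)$ and $A_U\in GL_n(F_U)$. I would prove this by an iterative approximation scheme, successively improving a crude factorization modulo higher powers of a uniformizer using the $t$-adic completeness of $\widehat{R_P}$ and $\widehat{R_U}$ together with the regularity of $\widehat{X}$; the hypothesis that $\mathscr{P}$ contains every point where components of $X$ meet is precisely what makes the overlap $F_\wp$ behave correctly with respect to both $F_P$ and $F_U$. From this lemma together with the intersection identity $F_P\cap F_U=F$ realized inside $F_\wp$, I would extract the vector space patching equivalence. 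Essential surjectivity of the base change functor follows by choosing trivializations of each $V_\xi$ as a free module, encoding the glueings $\phi_{ij}$ as matrices over the branch fields, factoring them, and using the factors to adjust the local trivializations so that they cohere into a single trivialization over $F$. Full faithfulness reduces to the statement that an $F$-linear map is determined by its images in the $F_P$ and $F_U$ agreeing over the branches, which is the intersection identity applied entrywise. For the multi-patch system $\mathscr{I}$, the argument extends because distinct branches correspond to disjoint pairs of patches, so the factorizations can be performed independently.

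To upgrade from vector spaces to vector spaces with a fixed tensor, given a patching problem $\{(V_\xi,\mu_\xi)\}$ the vector space equivalence produces an $F$-vector space $V$ with isomorphisms $V\otimes_F F_\xi\cong V_\xi$ compatible with the $\phi_{ij}$. The transported tensors form a patching problem on the tensor space built from $V$, whose unique solution is the desired $\mu$ on $V$ restricting to each $\mu_\xi$; the compatibility of $\mu$ with the defining algebraic identities follows from descent as above. The hard part is the matrix factorization lemma; once it is in hand, everything else is categorical bookkeeping. Since the statement is cited directly from \parencite{harbater2010patching} and \parencite{harbater2015local}, this plan is essentially a roadmap for reading those proofs rather than a new argument.
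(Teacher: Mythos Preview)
The paper does not supply its own proof of this theorem; it is quoted as a black-box result from \parencite{harbater2010patching} and \parencite{harbater2015local}, so there is no argument in the paper to compare against. Your sketch is an accurate roadmap of the proofs in those references: the simultaneous factorization $GL_n(F_\wp)=GL_n(F_P)\cdot GL_n(F_U)$ at each branch, together with the intersection property, yields the vector-space patching equivalence, and structured objects (algebras, $G$-Galois algebras) are then handled by carrying along the relevant tensors. One small imprecision worth flagging: simplicity and centrality are not literally closed conditions on a multiplication tensor, so the phrase ``defining closed conditions \ldots\ descend by faithfully flat descent'' is too glib there. In the cited sources the point is handled by recasting ``central simple'' as ``Azumaya,'' i.e.\ the map $A\otimes_F A^{o}\to\mathrm{End}_F(A)$ is an isomorphism, which \emph{is} a condition preserved and reflected under faithfully flat base change; the $G$-Galois condition is treated analogously via the isomorphism $E\otimes_F E\cong \prod_{\sigma\in G}E$. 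With that adjustment your outline matches the literature, as you yourself note in the final sentence.
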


\begin{remark}
It will be useful to change a given model by blow-ups. We note that the blow-up of a regular model of $F$ at a closed point $P$ of the closed fiber is also a regular model of $F$ that has one extra connected component of the closed fiber, the exceptional divisor, which is isomorphic to $\mathbb{P}^1_{k(P)}$ where $k(P)$ is the residue field of $\mathscr{O}_{\widehat{X},P}$ (see Section 13.19 of \parencite{gortz2010algebraic} and Section 19.4 of \parencite{grothendieck1967etude}). This component intersects the rest of the closed fiber transversely.
\end{remark}

\section{Local Admissibility of $p$-Sylow Subgroup}\label{localad}

A key step in proving admissibility over a semi-global field using Theorem \ref{patchingequivalence} is establishing the admissibility of Sylow subgroups over points in the patching framework. For convenience, we record some definitions. 

\begin{definition}
Given a field $F$ and a finite field extension $E/F$, $E$ is called $F$-adequate if there exists a division algebra central over $F$ containing $E$ as a maximal subfield.
\end{definition}

\begin{definition}
A finite group $G$ is admissible over $F$ if there exists a Galois field extension $E/F$ with group $G$ such that $E$ is $F$-adequate.
\end{definition}

Harbater, Hartmann and Krashen proved the existence of certain adequate extensions over points in the patching framework in the case that $F$ is a function field over a complete discretely valued field with residue field $k$ where char$(k)$ does not divide the order of the group. 
 
\begin{lemma}[Lemma 4.3 and Proposition 4.4 in \parencite{harbater2011patching}]\label{sylowblocks}
Let $K$ be a complete discretely valued field with algebraically closed residue field $k$, and let F be a finitely generated field extension of K with transcendence degree 1. Let $(\widehat{X},\mathscr{I},\mathscr{F})$ be a semi-global patching framework for $F$ with a point $P\in \mathscr{P}$ such that $X$ is regular at $P$. Suppose $G$ is an abelian group of rank at most two with $\text{char}(k)\nmid \vert G\vert$, then there exists an $F_P$-adequate field extension $E/F_P$ with Galois group $G$ that splits over the unique branch of $X$ at $P$, i.e. $E\otimes_{F_P}F_\wp\cong F_\wp^{\vert G\vert}$ where $\wp$ is the unique branch of $X$ at $P$.
\end{lemma}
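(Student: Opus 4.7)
The plan is to identify the completed local ring at $P$ explicitly, construct a Kummer extension $E/F_P$ that splits at $\wp$, and then exhibit a central $F_P$-division algebra of degree $|G|$ containing $E$ as a maximal subfield.

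Since $X$ is regular at $P$ and $\kappa(P) = k$ is algebraically closed, Cohen's structure theorem gives $\widehat{R_P} \cong k[[x,t]]$, where $t$ is the uniformizer of $T$ and $x$ is a regular parameter cutting out the unique component of $X$ through $P$; hence the unique branch is $\wp = (t)$ and $F_\wp \cong k((x))((t))$. Using the invariant factor decomposition write $G \cong C_n \times C_m$ with $n \mid m$. I would then set $a = x^n + t$ and $b = x^m + ct$, where $c \in k^*$ is chosen so that $(a)$ and $(b)$ are distinct height-$1$ primes of $\widehat{R_P}$ (any $c \in k^*$ works if $n \neq m$, and any $c \neq 1$ works if $n = m$); both are irreducible in $\widehat{R_P}$. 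Over $F_\wp$, one has $a = x^n(1 + tx^{-n})$, where $x^n$ is manifestly an $n$-th power and $1 + tx^{-n} \in 1 + t\,k((x))[[t]]$ has an $n$-th root by Hensel's lemma (applicable since $\text{char}(k) \nmid n$); analogously $b$ is an $m$-th power in $F_\wp$. Comparing $(a)$- and $(b)$-adic valuations in the UFD $\widehat{R_P}$ shows $a^i b^j$ is a $d$-th power in $F_P^*$ (for $d \mid m$) only when $d \mid i$ and $d \mid j$, so $E := F_P(\sqrt[n]{a}, \sqrt[m]{b})$ is Galois over $F_P$ with group $G$ and $E \otimes_{F_P} F_\wp \cong F_\wp^{|G|}$.

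For adequacy, when $G = C_m$ is cyclic the cyclic algebra $D = (b, x)_{m, \zeta_m}$ works: under the identification $\kappa(b) \cong k((x))$ (via $t \mapsto -x^m/c$), its tame symbol at $(b)$ is the class of $\bar x \in k((x))^*/(k((x))^*)^m$, which has order $m$, forcing $\text{ind}(D) = m = \deg(D)$, so $D$ is division with $E = F_P(\sqrt[m]{b})$ as a maximal subfield. In the rank-$2$ case I would construct $D$ as a crossed-product algebra of degree $nm$ assembled from symbol algebras ramified at the distinct primes $(a)$ and $(b)$ together with a commutation cocycle entwining the Kummer generators $\sqrt[n]{a}, \sqrt[m]{b}$ of $E$; the independent tame symbols at $(a)$ and $(b)$ --- of orders $n$ and $m$ respectively --- together with Saltman's period-index analysis over the $2$-dimensional complete regular local ring $\widehat{R_P}$ should force $\text{ind}(D) = nm$.

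The main obstacle is the rank-$2$ case of adequacy. The most naive candidate, a tensor product $D_1 \otimes D_2$ of cyclic division algebras of degrees $n$ and $m$ containing $F_P(\sqrt[n]{a})$ and $F_P(\sqrt[m]{b})$ respectively, has period dividing $\text{lcm}(n, m) = m$ and so typically fails to attain index $nm$; the desired algebra lives in the regime $\text{per} < \text{ind}$. One must instead construct a crossed product whose cocycle is chosen carefully enough that $E$ embeds as a maximal subfield while the ramification data at $(a)$ and $(b)$ still force the index up to $nm$; this is precisely where the hypothesis that $G$ has rank at most $2$ (so that only two Kummer generators need to be controlled) is essential.
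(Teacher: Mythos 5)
Your construction of the Kummer extension is sound: with $\widehat{R_P}\cong k[[x,t]]$, $\wp=(t)$, $F_\wp\cong k((x))((t))$, the elements $a=x^n+t$, $b=x^m+ct$ generate distinct height-one primes, the valuation argument gives $\mathrm{Gal}(E/F_P)\cong C_n\times C_m$, and Hensel's lemma gives $E\otimes_{F_P}F_\wp\cong F_\wp^{nm}$; the cyclic case of adequacy via the symbol algebra $(b,x)_m$ and its residue at $(b)$ is also correct. But the rank-two case of adequacy is the entire content of the lemma, and there you have not given a proof: ``a crossed product whose cocycle is chosen carefully enough'' together with an appeal to ``Saltman's period--index analysis'' that ``should force $\mathrm{ind}(D)=nm$'' is a plan, not an argument. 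Note also that the residues of your specific $a,b$ are not independent enough by themselves: for instance the residue of $(a,b)_{nm,\zeta}$ at $(a)$ is the class of $x^m-cx^n$ in $k((x))^\times/(k((x))^\times)^{nm}$, which has order only $nm/\gcd(\min(n,m),nm)$, so the choice of elements and the index computation genuinely interact and must be carried out, not asserted.

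The device you are missing, which is exactly the content of Lemma 4.3 of \parencite{harbater2011patching}, is that no exotic cocycle is needed: a \emph{single} symbol algebra of degree $nm$ already does the job. If $\zeta$ is a primitive $nm$-th root of unity and $(a,b)_{nm,\zeta}$ has standard generators $u,v$ with $u^{nm}=a$, $v^{nm}=b$, $uv=\zeta vu$, then $u^m$ and $v^n$ commute (since $\zeta^{mn}=1$) and satisfy $(u^m)^n=a$, $(v^n)^m=b$, so $F_P[u^m,v^n]\cong F_P(a^{1/n},b^{1/m})$ sits inside $(a,b)_{nm,\zeta}$ as a commutative subalgebra of dimension $nm$, i.e.\ as a maximal subfield once the algebra is division and the subalgebra is a field of degree $nm$. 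The remaining work in Proposition 4.4 of \parencite{harbater2011patching} is then to choose $a,b$ in terms of the local parameters $x,t$ at $P$ so that residue computations at suitable height-one primes of $k[[x,t]]$ force $(a,b)_{nm,\zeta}$ to have index $nm$, while $a$ is an $n$-th power and $b$ an $m$-th power in $F_\wp$, giving the splitting over the branch. Your instinct that the naive tensor product $D_1\otimes D_2$ is inadequate is reasonable (its period divides $\mathrm{lcm}(n,m)$, and over $F_P$, with algebraically closed residue field, one cannot expect index to exceed period), but that observation only rules out one construction; without the symbol-algebra trick or an equally concrete replacement, the crux of the lemma remains unproved in your proposal.
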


In this section we prove a statement similar to Lemma \ref{sylowblocks} in the case that $G$ is a $p$-group and $p$ is the characteristic of $k$. Unfortunately, we were not able to find a direct proof that the adequate field extension splits over the branch field, like the final statement proved in the lemma. In Section \ref{rotation} we will gain weaker control over the behavior of the adequate field extension over the branch field which will be enough to establish the compatibility of algebraic objects required for patching later on.

Working with $G$-Galois extensions in this ``bad" characteristic case is facilitated by being able to parameterize such extensions using results from generic Galois theory. We record some key results in the theory that will be used repeatedly. 

\begin{theorem}[Theorem 2.3 in \parencite{saltman1978noncrossed}]\label{genericgalois}
Let $p$ be prime and let $R(G)$ denote the polynomial ring $\mathbb{F}_p[x_1, x_2,\dots, x_m]$. If $G$ is a finite group of order $p^m$ then there exists a $G$-Galois $R(G)$-algebra, $S(G)$, such that:
\begin{enumerate}
	\item[i.] for every characteristic $p$ ring $R$ and ring homomorphism $f:R(G)\rightarrow R$, $S(G)\otimes_f R$ is a $G$-Galois $R$-algebra, and 
	\item[ii.] for every characteristic $p$ ring $R$ and $G$-Galois $R$-algebra $S$, there exists a ring homomorphism $f: R(G)\rightarrow R$, such that $S\cong S(G)\otimes_f R$.
\end{enumerate}
Here the subscript $f$ is a reminder that we view $R$ as an $R(G)$-module via the homomorphism $f$ in the tensor product.
\end{theorem}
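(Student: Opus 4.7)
The plan is to argue by induction on $m$. For the base case $m = 1$, $G \cong \mathbb{Z}/p\mathbb{Z}$, and Artin-Schreier theory suggests taking $S(G) = \mathbb{F}_p[x_1][y]/(y^p - y - x_1)$, with the generator of $G$ acting via $y \mapsto y + 1$. Property (i) is immediate because $y^p - y - r$ is separable over any characteristic $p$ ring and any element $r$, and property (ii) is exactly the classical Artin-Schreier classification of $\mathbb{Z}/p\mathbb{Z}$-Galois covers of characteristic $p$ rings.

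For the inductive step I would choose a central subgroup $Z \cong \mathbb{Z}/p\mathbb{Z}$ of $G$ (available because $p$-groups have nontrivial center), set $H = G/Z$, and apply the inductive hypothesis to obtain a generic $S(H)/R(H)$ with $R(H) = \mathbb{F}_p[x_1, \ldots, x_{m-1}]$. The central extension $1 \to Z \to G \to H \to 1$ is classified by a 2-cocycle $c \in Z^2(H, \mathbb{F}_p)$. Because $S(H)$ is $H$-Galois, the normal basis theorem for Galois algebras yields $S(H) \cong R(H)[H]$ as an $H$-module, which is induced, so $H^i(H, S(H)) = 0$ for all $i \geq 1$. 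Therefore $c$, viewed as valued in $\mathbb{F}_p \subseteq S(H)$, is a coboundary $\delta f$ for some 1-cochain $f : H \to S(H)$. Using that $c^p = c$, a short check shows $h \mapsto f(h)^p - f(h)$ is a 1-cocycle, hence of the form $\delta z$ for some $z \in S(H)$.

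I would then define $R(G) = R(H)[x_m]$ and
\[
S(G) = \bigl(S(H) \otimes_{R(H)} R(G)\bigr)[y] \big/ \bigl(y^p - y - (z + x_m)\bigr),
\]
extending the $H$-action to a $G$-action by letting a chosen lift $\widetilde h$ of $h \in H$ act as $y \mapsto y + f(h)$ and a generator of $Z$ act as $y \mapsto y + 1$; the relation $\delta f = c$ is precisely what makes this well defined, and the Artin-Schreier structure then makes $S(G)$ into a $G$-Galois $R(G)$-algebra. Property (i) follows because being $G$-Galois, formulated via $S \otimes_R S \cong \prod_{g \in G} S$, is preserved under arbitrary base change. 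For property (ii), I would take a $G$-Galois $R$-algebra $S$, form $S^Z$ (which is $H$-Galois over $R$), invoke induction to produce $f_H : R(H) \to R$ with $S^Z \cong S(H) \otimes_{f_H} R$, write the $Z$-cover as $S = S^Z[u]/(u^p - u - w)$ via Artin-Schreier, and extend $f_H$ to $f : R(G) \to R$ by choosing the image of $x_m$ so that $w$ and $z + f(x_m)$ coincide modulo Artin-Schreier coboundaries in $S^Z$.

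The main obstacle is property (ii): verifying that a single new variable $x_m$ genuinely suffices to realize every Artin-Schreier class $w - z$ representing the $Z$-extension of $S^Z$. Since the image of $z$ in $S^Z$ is fixed once $f_H$ is chosen, and $f(x_m)$ only ranges over $R \subseteq S^Z$, this is not automatic. Resolving it requires exploiting the freedom to modify $f_H$ among specializations producing isomorphic $H$-Galois algebras, together with the corresponding transformations of $f$ and $z$, so that the Artin-Schreier class of the $G$-cover can always be absorbed by the one free parameter $x_m$ once the $H$-structure is matched.
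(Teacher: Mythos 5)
First, a point of comparison: the paper does not prove this statement at all --- it is quoted verbatim from Saltman --- so your argument must stand on its own; that said, your inductive Artin--Schreier strategy through a central subgroup $Z\cong\mathbb{Z}/p\mathbb{Z}$ is essentially the standard (Saltman-style) route, and most of it is sound. The base case, the well-definedness of the $G$-action on $S(G)=(S(H)\otimes_{R(H)}R(G))[y]/(y^p-y-(z+x_m))$, and property (i) are fine. One small repair: the normal basis theorem can fail for Galois extensions of commutative rings, so you should not claim $S(H)\cong R(H)[H]$ as an $H$-module; what you actually need, $H^i(H,S(H))=0$ for $i\geq 1$, follows because a Galois extension of commutative rings is finitely generated projective over the group ring $R(H)[H]$, hence a direct summand of a module induced from the trivial subgroup.

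The genuine gap is exactly where you flag it --- property (ii), i.e. that the single variable $x_m$ suffices --- and the fix you gesture at (``modifying $f_H$'') is not the mechanism that closes it. The missing idea is the classification of all $G$-lifts of a fixed $H$-Galois algebra: they form a single orbit under twisting by $H^1_{\text{et}}(R,\mathbb{Z}/p\mathbb{Z})=R/\mathcal{P}(R)$, and $x_m$ parameterizes precisely that orbit. Concretely, set $T:=S^Z\cong S(H)\otimes_{f_H}R$, let $z_R\in T$ be the image of $z$, and fix the same section $h\mapsto\widetilde{h}$ with $2$-cocycle $c$ used to build $S(G)$. Since $S$ is projective over $T[Z]$ one gets $u\in S$ with $\sigma(u)=u+1$, so $S=T[u]/(u^p-u-w)$ with $w=u^p-u\in T$. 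Because $Z$ is central, $\sigma(\widetilde{h}(u)-u)=\widetilde{h}(u)-u$, so $f'_h:=\widetilde{h}(u)-u$ lies in $T$ (this is where connected-component issues would otherwise bite), and one checks $\delta f'=c$ and $(f'_h)^p-f'_h=h(w)-w$. Then $f'-f$ is a $1$-cocycle valued in $T$, hence $f'-f=\delta s$ for some $s\in T$ by $H^1(H,T)=0$; it follows that $b:=w-z_R-(s^p-s)$ is $H$-invariant, so $b\in R$, and $y\mapsto u-s$ is a $G$-equivariant isomorphism $S(G)\otimes_f R\cong S$ where $f$ extends $f_H$ by $f(x_m)=b$. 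In particular no modification of $f_H$ is needed, and the naive comparison of $w$ with $z_R+f(x_m)$ modulo $\mathcal{P}(S^Z)$ that worried you is resolved by this difference-cocycle (equivalently, central twisting) argument. Without it the induction does not close; with it, your outline becomes a complete proof.
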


\begin{lemma}[Lemma 5.6 in \parencite{saltman1982generic}]\label{krasners}
Let $F$ be a characteristic $p$ field, complete with respect to a nonarchimedean real-valued valuation $v$, and $G$ a group of order $p^m$ with $R(G)$ and $S(G)$ as in Theorem \ref{genericgalois}. Suppose $f:R(G)\rightarrow F$ is a ring homomorphism and let $a_i=f(x_i)$. There is a $\delta >0$ such that if $b_i\in F$ satisfy $\norm{b_i-a_i}_v<\delta$ for all $i$, then the ring homomorphism $f':R(G)\rightarrow F$ defined by setting $f'(x_i)=b_i$ has the property that $S(G)\otimes_f F\cong S(G)\otimes_{f'}F$ as $G$-Galois $F$-algebras. 
\end{lemma}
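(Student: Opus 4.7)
The plan is to reduce the statement to a Krasner-style argument, using that $S(G)\otimes_f F$ is a finite étale $F$-algebra.

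First, I would fix a finite presentation $S(G) = R(G)[y_1,\ldots,y_n]/(h_1,\ldots,h_r)$ as an $R(G)$-algebra, chosen so that $y_1,\ldots,y_n$ separate the $\overline F$-valued points of $\mathrm{Spec}(S(G))$. Base-changing via $f$ gives
\[
A_f := S(G)\otimes_f F = F[y_1,\ldots,y_n]/(h_1^f,\ldots,h_r^f),
\]
where $h_j^f$ is obtained from $h_j$ by substituting $x_i\mapsto a_i$. The coefficients of $h_j^f$ depend polynomially, hence $v$-continuously, on $(a_1,\ldots,a_m)$. Since $A_f$ is a $G$-Galois $F$-algebra it is in particular étale over $F$, so the Jacobian matrix of the $h_j^f$ in the $y$-variables has full rank at every $\overline F$-point $\overline\alpha$ of $\mathrm{Spec}(A_f)$; there are exactly $|G|$ such points, corresponding to the $|G|$ distinct $F$-algebra maps $A_f\to\overline F$.

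Second, I would invoke the multivariate Hensel's lemma: for $\delta$ sufficiently small, each root $\overline\alpha\in\overline F^n$ of $h^f = 0$ admits a unique nearby root $\overline\beta\in\overline F^n$ of the perturbed system $h^{f'} = 0$, with $\norm{\beta_i-\alpha_i}_v$ small. Classical Krasner's lemma then gives $F(\overline\alpha) = F(\overline\beta)$ inside $\overline F$. Provided $\delta$ is smaller than the minimum pairwise separation between the finitely many $\overline F$-points of $\mathrm{Spec}(A_f)$, this Hensel correspondence is a bijection, and it is $\mathrm{Gal}(\overline F/F)$-equivariant since Galois conjugates of the Hensel lift are themselves Hensel lifts of Galois conjugates (by the uniqueness clause). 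Matching corresponding residue fields yields an $F$-algebra isomorphism $A_f\cong A_{f'}$.

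The main obstacle is upgrading this to a $G$-equivariant isomorphism. The $G$-action on $S(G)$ is implemented by polynomial formulas $\sigma(y_i) = q_{\sigma,i}(x,y)$, which specialize via $f$ and $f'$ to the actions on $A_f$ and $A_{f'}$ respectively. On $\overline F$-points these act as $\overline\alpha\mapsto(q_{\sigma,i}(a,\overline\alpha))_i$ and $\overline\beta\mapsto(q_{\sigma,i}(b,\overline\beta))_i$. By joint continuity of polynomial evaluation, the image of $\overline\beta$ under the perturbed action lies close to the image of $\overline\alpha$ under the original action; hence, by uniqueness of the Hensel lift, it equals the Hensel lift of $\sigma\cdot\overline\alpha$. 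The Hensel correspondence therefore intertwines the two $G$-actions, giving the desired $G$-Galois isomorphism $S(G)\otimes_f F\cong S(G)\otimes_{f'}F$ for all sufficiently small $\delta$.
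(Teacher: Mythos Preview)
The paper does not prove this lemma; it is quoted from Saltman's 1982 paper and used as a black box. So there is no in-paper argument to compare your proposal against directly.

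Your outline is essentially correct, but two technical points deserve attention. First, for the multivariate Hensel step to give a \emph{unique} nearby root you need a square invertible Jacobian, not merely a matrix of ``full rank'' with $r$ possibly different from $n$. This is not automatic for an arbitrary finite presentation of an \'etale algebra; however, Saltman's explicit construction of $S(G)$ as an iterated Artin--Schreier tower
\[
S(G)=R(G)[z_1,\dots,z_m]/(z_1^p-z_1-g_1,\ \dots,\ z_m^p-z_m-g_m),\qquad g_i\in R(G)[z_1,\dots,z_{i-1}],
\]
supplies exactly such a presentation (the Jacobian is lower triangular with $-1$'s on the diagonal), so you should commit to that one. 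Second, once you have established that the Hensel bijection on $\overline F$-points is $\mathrm{Gal}(\overline F/F)$-equivariant, the separate appeal to Krasner is redundant: the equivalence between finite \'etale $F$-algebras and finite continuous $\mathrm{Gal}(\overline F/F)$-sets already yields $A_f\cong A_{f'}$ as $F$-algebras, and your $G$-equivariance argument then upgrades this to a $G$-Galois isomorphism. (Also note that $\overline F$ itself need not be complete; run Hensel inside the finite, hence complete, extension $F(\overline\alpha)$.)

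With the Artin--Schreier presentation in hand one can alternatively argue layer by layer: perturbing $z^p-z=c$ to $z^p-z=c'$ with $|c'-c|$ small, one-variable Hensel on $T^p-T-(c'-c)$ produces a root in the base field, so the extension and its $\mathbb{Z}/p$-action are literally unchanged. That inductive version is closer in spirit to how Saltman's constructions are typically handled, while your approach packages the same content uniformly.
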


The following lemma was proved by Guico in \parencite{erneststhesis}. We present the statement and proof for completeness. 

\begin{lemma}[Lemma 3.2.1 in \parencite{erneststhesis}]\label{cyclicad}
Let $R$ be a domain with fraction field $F$ and suppose $\wp$ is a height 1 prime ideal of $R$ such that $\widehat{R_\wp}$, the completion of $R$ with respect to $\wp$, is a complete discretely valued ring. Let $\widehat{F}$ be the fraction field of $\widehat{R_\wp}$. If $E/F$ is a degree $n$ cyclic field extension of $F$ with group $G=\langle\sigma\rangle$ such that $\widehat{E}:=E\otimes_F \widehat{F}$ is an unramified field extension of $\widehat{F}$, then $E$ is $F$-adequate. 
\end{lemma}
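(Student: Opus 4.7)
The natural strategy is to construct a cyclic algebra over $F$ containing $E$ as a maximal subfield, and to verify that it is a division algebra by passing to the complete local field $\widehat F$. Concretely, the plan is to choose $a \in F^\times$ with $\wp$-adic valuation equal to $1$, form the cyclic algebra $A = (E/F,\sigma,a)$, and exploit the unramifiedness of $\widehat E / \widehat F$ to rule out zero divisors after scalar extension.

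First, pick $a \in F^\times$ with $v(a) = 1$, where $v$ is the discrete valuation on $\widehat F$ coming from $\widehat{R_\wp}$. Such an element exists because $F$ is dense in $\widehat F$: any uniformizer of $\widehat{R_\wp}$ can be approximated by an element of $F$, and by the ultrametric inequality the approximation inherits valuation $1$. Form the cyclic algebra
\[
A := (E/F,\sigma,a) = \bigoplus_{i=0}^{n-1} E\,u^i, \qquad u^n = a,\ u\alpha u^{-1} = \sigma(\alpha) \text{ for } \alpha \in E.
\]
By the standard theory of cyclic algebras, $A$ is a central simple $F$-algebra of dimension $n^2$ containing $E$ as a maximal subfield, so it suffices to show $A$ is a division algebra.

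To this end, extend scalars to $\widehat F$. Since $\widehat E$ is a field Galois over $\widehat F$ with group $\langle \sigma \rangle$, one has
\[
A \otimes_F \widehat F \;\cong\; (\widehat E/\widehat F,\sigma,a).
\]
By the classical criterion, the exponent of this cyclic algebra in $\operatorname{Br}(\widehat F)$ equals the order of $a$ in $\widehat F^\times / N_{\widehat E/\widehat F}(\widehat E^\times)$. Here the unramified hypothesis does all the work: letting $w$ be the unique extension of $v$ to $\widehat E$, one has $w(\widehat E^\times) = v(\widehat F^\times) = \mathbb Z$, and for every $u \in \widehat E^\times$,
\[
v\bigl(N_{\widehat E/\widehat F}(u)\bigr) = n\,w(u) \in n\mathbb Z.
\]
Since $v(a^k) = k$ is not divisible by $n$ for $1 \le k \le n-1$, no such power of $a$ lies in $N_{\widehat E/\widehat F}(\widehat E^\times)$. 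Combined with $a^n = N_{\widehat E/\widehat F}(a)$, this shows $a$ has order exactly $n$ modulo norms, so the exponent of $(\widehat E/\widehat F,\sigma,a)$ equals $n$. As its degree is also $n$, its index is $n$, forcing it to be a division algebra.

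Finally, if $A = M_r(D)$ for some central division $F$-algebra $D$, then $A \otimes_F \widehat F = M_r(D \otimes_F \widehat F)$ can be a division algebra only when $r = 1$. Hence $A$ is itself a division algebra, and since $E$ sits inside as a maximal subfield, $E$ is $F$-adequate. The only step that requires genuine attention is the norm-valuation computation, which collapses to a routine check precisely because of the unramifiedness hypothesis; the rest of the argument assembles standard facts about cyclic algebras and complete discretely valued fields.
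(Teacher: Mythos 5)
Your proof is correct and takes essentially the same route as the paper: both choose a parameter $a\in F^\times$ of $\wp$-adic valuation $1$, form the cyclic algebra $(E/F,\sigma,a)$, and use the unramifiedness of $\widehat{E}/\widehat{F}$ to see that norms have valuation in $n\mathbb{Z}$, so the class has period $n$ over $\widehat{F}$ and hence index equal to degree. The only cosmetic difference is that the paper concludes via the divisibility chain $n\mid\operatorname{per}(\widehat{A})\mid\operatorname{per}(A)\mid\operatorname{ind}(A)\mid\deg(A)$ rather than first showing $A\otimes_F\widehat{F}$ is division, which is the same argument in slightly different packaging.
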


\begin{proof}
We let $b$ be a uniformizer of $\widehat{F}$ and $v$ denote the discrete valuation on $\widehat{F}$. It suffices to show that the cyclic algebra $A=(E,\sigma, b)$ is a division algebra. We will require some standard facts about the period and index of cyclic algebras which can be found in Sections 12-15 of \parencite{pierce1982associative}.

Consider $\widehat{A}:=A\otimes_F \widehat{F}=(\widehat{E},\sigma, b)$, a cyclic algebra over $\widehat{F}$. The period of $\widehat{A}$ is equal to the order of $\overline{b}$ in $\widehat{F}^\times/\text{N}_{\widehat{E}/\widehat{F}}(\widehat{E^\times})$. It suffices to show that $n$ divides the order of $\overline{b}$ because then 
\[n\mid \text{per}(\widehat{A})\mid \text{per}(A)\mid \text{ind}(A)\mid \deg{A}=n,\]
showing that $\text{ind}(A)=\text{deg}(A)$ and $A$ is a division algebra. 

Since $\widehat{E}$ is an unramified degree $n$ extension of the complete discretely valued field $\widehat{F}$, $v$ extends uniquely to a discrete valuation $w$ on $\widehat{E}$ given by $w(x):=\frac{1}{n}v(\text{N}_{\widehat{E}/\widehat{F}}(x))$ for all $x\in\widehat{E}$ (Corollary II.2.4 in \parencite{serre2013local}). We have a group homomorphism from $F^\times$ to $\mathbb{Z}/n\mathbb{Z}$ given by $x\mapsto \overline{v(x)}$ and the previous sentence shows that $\text{N}_{\widehat{E}/\widehat{F}}(E^\times)$ is in the kernel of this map, so the homomorphism factors through $\widehat{F}^\times/\text{N}_{\widehat{E}/\widehat{F}}(\widehat{E^\times})$. Under this map it is clear that $\overline{b}\mapsto \overline{v(b)}=\overline{1}$ and since $\overline{1}$ generates $\mathbb{Z}/n\mathbb{Z}$, $n$ divides the order of $\overline{b}$. 
\end{proof}

The last lemma in this section is a slightly weaker version of Lemma 3.2.2 in \parencite{erneststhesis}. A great deal of the proof is the same as that given by Guico. We streamline parts of the argument since we do not require constructions as explicit as those given in the original proof. 

\begin{lemma}\label{pexistence}
Let $R$ be an equicharacteristic complete regular local ring of dimension 2 with residue field $k$ of characteristic $p>0$. If $F$ is the fraction field of $R$ and $G$ is a group with $\vert G\vert=p^m$ for some $m\geq 1$, then $G$ is admissible over $F$. 
\end{lemma}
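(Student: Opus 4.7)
The plan is to mirror Guico's approach: reduce via the Cohen structure theorem to a formal power series ring, descend the $G$-Galois extension from a well-chosen completion using generic Galois theory, and establish adequacy via a crossed-product construction extending the cyclic algebra of Lemma \ref{cyclicad}.

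First, since $R$ is an equicharacteristic complete regular local ring of dimension $2$ with residue field $k$, the Cohen structure theorem gives an isomorphism $R\cong k[[u,t]]$ for a regular system of parameters $u,t$. Let $\wp=(t)$, a height-$1$ prime of $R$. Then $R/\wp\cong k[[u]]$, so $\widehat{R_\wp}$ is a complete discrete valuation ring with uniformizer $t$ and residue field $k((u))$, and $\widehat{F}:=\widehat{F_\wp}\cong k((u))((t))$.

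Second, I would construct an unramified $G$-Galois field extension $\widehat{E}/\widehat{F}$. Since $k((u))$ has characteristic $p$, the Artin-Schreier image $\{y^p-y:y\in k((u))\}\subseteq k((u))$ has infinite codimension, and iterating with Witt vectors realizes any finite $p$-group as a Galois group over $k((u))$. Lifting such an extension to an unramified extension of $\widehat{R_\wp}$ produces $\widehat{E}$. By Theorem \ref{genericgalois}, there is a ring homomorphism $\widehat{f}:R(G)\to\widehat{F}$ with $\widehat{E}\cong S(G)\otimes_{\widehat{f}}\widehat{F}$. Since $F$ is dense in $\widehat{F}$ in the $\wp$-adic topology, I approximate each $\widehat{f}(x_i)$ by an element of $F$, obtaining $f:R(G)\to F$ close enough that Lemma \ref{krasners} yields $S(G)\otimes_f\widehat{F}\cong\widehat{E}$. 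Setting $E:=S(G)\otimes_f F$, one gets a $G$-Galois $F$-algebra whose base change to $\widehat{F}$ is a field, which forces $E$ itself to be a $G$-Galois field extension of $F$.

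The main obstacle is adequacy of $E$ over $F$. When $G$ is cyclic, Lemma \ref{cyclicad} applies with the prime $\wp$ to produce the cyclic algebra $(E,\sigma,t)$ as the required division algebra. For a general $p$-group $G$ I would build a crossed product $A=(E/F,G,c)$ for a cocycle $c$ whose completed Brauer class is concentrated at $\wp$ through the uniformizer $t$, paralleling the role of $b$ in the cyclic case. The aim is to arrange that the completion $A\otimes_F\widehat{F}$ has index equal to $|G|$ over $\widehat{F}$; since $A$ always contains $E$ as a maximal commutative subalgebra, this forces $A$ itself to be a division algebra of degree $|G|$ over $F$ with $E$ as a maximal subfield. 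Exhibiting such a cocycle and verifying the index equality via residue analysis over $\widehat{F}$ is the technical heart of the argument, and is where the present proof should be able to replace Guico's explicit construction with a more abstract Brauer-theoretic analysis.
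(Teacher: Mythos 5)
Your opening steps---the Cohen reduction to $k[[u,t]]$, the construction of an unramified $G$-Galois extension of $\widehat{F}=k((u))((t))$, and the descent to a $G$-Galois field extension $E/F$ via Theorem \ref{genericgalois}, Lemma \ref{krasners} and density---coincide with the paper's argument (which only needs, and only carries out, the cyclic case; also note that Witt vectors alone give cyclic extensions, and arbitrary $p$-groups over $k((u))$ require the Witt--Shafarevich freeness of the maximal pro-$p$ quotient, though this is true and ultimately unnecessary). The genuine gap is the adequacy step for noncyclic $G$, which you defer as ``the technical heart,'' and the route you sketch cannot be completed. You want a cocycle $c$ so that $(E/F,G,c)\otimes_F\widehat{F}=(\widehat{E},G,\widehat{c})$ has index $\vert G\vert$. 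But $\widehat{E}/\widehat{F}$ is by construction unramified over a complete discretely valued field whose residue field is $k((u))$, and in the case that matters for Theorem \ref{mainresult} ($k$ algebraically closed) one has $\mathrm{Br}(k((u)))=0$; a standard computation with $\widehat{E}^\times\cong\mathcal{O}_{\widehat{E}}^\times\times t^{\mathbb{Z}}$, using this vanishing and the cohomological triviality of the principal units of an unramified extension, gives $\mathrm{Br}(\widehat{E}/\widehat{F})\cong H^2(G,\mathbb{Z})\cong\mathrm{Hom}(G,\mathbb{Q}/\mathbb{Z})$. So every Brauer class split by $\widehat{E}$ is that of a cyclic algebra $(\chi,t)$ with index equal to the order of the character $\chi$, and index $\vert G\vert$ at the completion is attainable only when $G$ is cyclic. (Equivalently, by Henselian valuation theory, a division algebra over $\widehat{F}$ with an unramified Galois maximal subfield has Galois group a quotient of the cyclic group $\Gamma_D/\Gamma_{\widehat{F}}$ once the residue Brauer group vanishes.) Moreover, the ``residue analysis'' you invoke is exactly what is unavailable for $p$-power torsion in characteristic $p$; this is why the paper works with the homological different elsewhere.

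The paper closes this gap differently: it proves adequacy only in the cyclic case, exactly as you do (Lemma \ref{cyclicad} applied to $(E,\sigma,t)$, with the period computed via the norm of the unramified extension), and then obtains the general $p$-group case from the cyclic one by quoting Theorem 1$'$ of \parencite{saltman1977splittings}: in characteristic $p$, if a division algebra $D$ over $F$ has a maximal subfield cyclic Galois of degree $p^m$ and $\dim_{\mathbb{F}_p}(F/\mathcal{P}(F))$ is infinite (automatic here by Theorem 3 of the same paper, since $D\neq F$), then $D$ has a maximal subfield Galois over $F$ with group $G$ for \emph{every} group $G$ of order $p^m$. No new algebra is built for noncyclic $G$, and no index needs to be certified at the completion beyond the cyclic case. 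Replacing your sketched crossed-product construction with this embedding theorem (or some other argument that does not force the completed algebra to be division) is what your proof is missing.
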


\begin{proof}
By the Cohen structure theorem it suffices to prove the result for $R=k[[x,y]]$ and $F=k((x,y))$. We note that $\wp=(x)$ is a height 1 prime ideal of $R$, and $\widehat{R_\wp}=k((y))[[x]]$ is a complete discretely valued ring with fraction field $\widehat{F}=k((y))((x))$ and residue field $k((y))$. 

We begin by proving the statement in the case that $G$ is cyclic. By Lemma \ref{cyclicad} it is enough to construct a Galois extension $E/F$ with group $G$ such that $\widehat{E}:=E\otimes_F \widehat{F}$ is an unramified field extension of $\widehat{F}$. We note that since $G$ is cyclic and char$(k((y)))=p$, a Galois extension of the residue field $k((y))$ with group $G$ exists if $\mathcal{P}(k((y)))\neq k((y))$, where $\mathcal{P}(z):=z^p-z$ (Theorem 8.32 in 8.11 of \parencite{jacobsonbasic89}). Let $v$ denote the $y$-adic valuation on $k((y))$. If $y^{-1}\in\mathcal{P}(k((y)))$ then there exists $z\in k((y))$ such that $v(z^p-z)=v(y^{-1})=-1$, so \[-1=v(z^p-z)\geq \min\{pv(z),v(z)\}\] with equality holding if and only if $pv(z)\neq v(z)$. Either case yields a contradiction, so $y^{-1}\notin\mathcal{P}(k((y)))$. Therefore there exists a Galois field extension of $k((y))$ with group $G$.

Since $k((y))$ is the residue field of the complete discretely valued field $\widehat{F}$, there exists a corresponding unramified extension $\widehat{E_\wp}$ of $\widehat{F}$ (Theorem III.5.2 in \parencite{serre2013local}) and $\widehat{E_\wp}/\widehat{F}$ is Galois with group $G$. Since $\widehat{E_\wp}$ is a $G$-Galois $\widehat{F}$-algebra and char$(\widehat{F})=p$ there exists a homomorphism $f:R(G)\rightarrow \widehat{F}$ such that $\widehat{E_\wp}\cong S(G)\otimes_f \widehat{F}$ with $R(G)$ and $S(G)$ as in Theorem \ref{genericgalois}. Additionally, there exists a $\delta>0$ as in Lemma \ref{krasners} and since $F$ is dense in $\widehat{F}$, for every $1\leq i\leq m$ there exist $b_i\in F$ such that $\norm{b_i-f(x_i)}_v<\delta$. Since char$(F)=p$ we can define a homomorphism $f':R(G)\rightarrow F$ by $x_i\mapsto b_i$ and set $E:=S(G)\otimes_{f'} F$. By Theorem \ref{genericgalois} we know that $E$ is a $G$-Galois $F$-algebra, and Lemma \ref{krasners} implies that $\widehat{E}=E\otimes_F \widehat{F}=S(G)\otimes_{f'}\widehat{F}\cong \widehat{E_\wp}$. Since $\widehat{E_\wp}$ is a field, $\widehat{E}$ is a field as well. This completes the proof in the case that $G$ is cyclic. 

Now let $G$ be any group of order $p^m$. Since we know there exists a division algebra $D$ over $F$ with a maximal subfield that is Galois over $F$ with group $C_{p^m}$, Theorem 1' of \parencite{saltman1977splittings} implies that $D$ also has a maximal subfield that is Galois over F with Galois group $G$. The additional condition that $F$ must satisfy in the statement of Theorem 1' of \parencite{saltman1977splittings}, that $\text{dim}_{\mathbb{F}_p}(F/\mathcal{P}(F))$ must be infinite, is automatically satisfied as a consequence of Theorem 3 in \parencite{saltman1977splittings} and the fact that $D\neq F$.
\end{proof}

\section{Branch Behavior of $p$-Sylow Extension}\label{rotation}

In this section we prove some technical lemmas which will allow us to control the behavior of division algebras and maximal subfields such as those shown to exist in Section \ref{localad} over the branch fields in our patching framework. As we are interested in $p$-power degree algebras in characteristic $p$, we will use the homological different to substitute for standard ramification and residue map techniques.

\begin{definition}
Let $R$ be a commutative ring and $A$ be an $R$-algebra. Let $A^o$ denote the opposite algebra of $A$ and set $A^e:=A\otimes_R A^o$. We have the following homomorphism of left $A^e$-modules (extended linearly):
\begin{align*}
\mu: A^e&\rightarrow A\\
a\otimes b^o&\mapsto ab\\ 
\end{align*}
Letting $J$ denote the kernel of this map we get the following short exact sequence of left $A^e$-modules:
\[
\begin{tikzcd}
    0 \arrow{r} & J \arrow{r} & A\otimes_R A^{o} \arrow{r}{\mu}  & A \arrow{r}  & 0 \\
\end{tikzcd}
\]
The homological different of the $R$-algebra $A$ is the image of the right annihilator of $J$ in $A$ under $\mu$, i.e. 
\[\mathfrak{H}(A/R):=\mu(Ann_{A^e}(J)).\]
Using the fact that $\{(a\otimes 1^o-1\otimes a^o):a\in A\}$ is a generating set for $J$, it can be checked that $\mathfrak{H}(A/R)$ is an ideal of the center of $A$ directly from the definition. In the case that $A$ is central over $R$ we observe that $\mathfrak{H}(A/R)=R$ if and only if there exists a separability idempotent for $A$ (see \parencite{de2006separable} section II.1). So when $A$ is central over $R$, $A$ is a separable $R$-algebra if and only if $\mathfrak{H}(A/R)=R$.
\end{definition}

\subsubsection*{Field Extension Unramified Almost Everywhere}
Now we study the ramification of field extensions over branch fields in the patching framework. 
\begin{definition}\label{ramification}
 Let $R$ be a regular local ring with fraction field $F$, and let $E/F$ be a finite separable extension. Let S be the integral closure of $R$ in $E$, and let $\wp$ be a prime ideal of $R$. We say that $E$ is unramified at $\wp$ if the induced map $R_{\wp}\rightarrow S_{q}$ is an unramified local homomorphism of local rings for every prime $q$ of S lying over $\wp$. Otherwise, we say that $E$ is ramified at $\wp$.
\end{definition}

\begin{lemma}\label{unramified_branches}
Let $R$, $S$, $F$, and $E$ be as in Definition \ref{ramification}. Additionally, assume that $R$ has Krull dimension 2. Then $E$ is only ramified at finitely many height 1 prime ideals of $R$.
\end{lemma}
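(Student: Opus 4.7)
My plan is to reduce the lemma to the standard fact that ramification of a finite separable extension at a height-1 prime is controlled by the discriminant of a primitive element, and then invoke finiteness of height-1 primes above a nonzero element of $R$.

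First I would pick $\alpha \in E$ integral over $R$ with $E = F(\alpha)$, which exists because $E/F$ is finite separable and $R$ is integrally closed in $F$. Let $f(x) \in R[x]$ be its monic minimal polynomial and set $d := \mathrm{disc}(f) \in R$, which is nonzero by separability. Monicity of $f$ makes $R[\alpha] \cong R[x]/(f)$ a finite free $R$-algebra of rank $[E:F]$, and it is a domain since it embeds into $E$.

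Next, I would fix a height-1 prime $\wp \subset R$ with $d \notin \wp$ and show $E$ is unramified at $\wp$. Since $R$ is a two-dimensional regular local ring, $R_\wp$ is a DVR, and since $d$ is a unit in $R_\wp$ the algebra $R[\alpha]_\wp = R_\wp[x]/(f)$ is \'etale over $R_\wp$. An \'etale extension of a normal ring is normal, so $R[\alpha]_\wp$ is a one-dimensional normal domain with fraction field $E$---that is, a (semilocal) Dedekind domain---and hence equals the integral closure of $R_\wp$ in $E$. Because integral closure commutes with localization this integral closure is also $S_\wp$, so $R[\alpha]_\wp = S_\wp$. Each $S_\mathfrak{q}$ for a prime $\mathfrak{q}$ of $S$ lying over $\wp$ is then a localization of the \'etale $R_\wp$-algebra $R[\alpha]_\wp$ at a maximal ideal, hence an unramified DVR extension of $R_\wp$ in the sense of Definition~\ref{ramification}.

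Finally, the set of height-1 primes containing the nonzero element $d \in R$ is finite: by Krull's principal ideal theorem every minimal prime of $(d)$ has height exactly $1$, every height-1 prime containing $d$ coincides with such a minimal prime, and $R$ being Noetherian bounds their number. The one delicate step is the identification $R[\alpha]_\wp = S_\wp$, since $R[\alpha]$ itself need not be integrally closed; the argument genuinely relies on \'etaleness at primes outside $V(d)$ to promote $R[\alpha]_\wp$ to its own integral closure and thereby convert the discriminant criterion into control over ramification in the full ring $S$.
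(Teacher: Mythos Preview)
Your proof is correct, but it follows a genuinely different route from the paper's.  The paper works intrinsically with the integral closure $S$: it invokes the homological different $\mathfrak{H}(S/R)$, uses the characterization that $E$ is ramified at $\wp$ precisely when some prime of $S$ above $\wp$ contains $\mathfrak{H}(S/R)$, and then appeals to a cited result that $\mathfrak{H}(S/R)$ has pure height~1 in $S$ to force any such prime to be an associated prime of $\mathfrak{H}(S/R)$, of which there are finitely many.  You instead bypass $S$ almost entirely by choosing a primitive integral element $\alpha$, replacing the different by the elementary discriminant $d=\mathrm{disc}(f)\in R$, and showing that away from $V(d)$ the monogenic order $R[\alpha]_\wp$ is already \'etale---hence normal---and therefore coincides with $S_\wp$.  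Your approach is more self-contained: it needs only the standard discriminant criterion for \'etaleness and the fact that \'etale extensions of normal rings are normal, rather than the machinery of the homological different and the pure-height theorem.  It also sidesteps any implicit reliance on $S$ being Noetherian or module-finite over $R$, since all finiteness lives in the free algebra $R[\alpha]$.  The paper's approach, on the other hand, is more uniform with the parallel argument for central simple algebras in the next lemma, where the homological different of an order plays the analogous role and no primitive-element shortcut is available.
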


\begin{proof}
By Theorem B.11 in \parencite{leuschke2012cohen}, $E$ is ramified at a prime ideal, $\wp$, of $R$ if and only if there exists a prime ideal, $q$, of $S$ lying over $\wp$ such that the homological different $\mathfrak{H}(S/R)$ is contained in $q$. We want to show there are only finitely many such prime ideals of $R$ of height 1. 

Let $q$ be a prime ideal of $S$ lying over a height 1 prime ideal of R, $\wp$, and suppose $q$ contains $\mathfrak{H}(S/R)$. Suppose, for the sake of contradiction, that $q$ is not an associated prime of $\mathfrak{H}(S/R)$, then it must strictly contain a prime ideal minimal over $\mathfrak{H}(S/R)$, $q'$, so we have $\mathfrak{H}(S/R)\subseteq q'\subsetneq q$. This implies that $\wp':=q'\cap R\subsetneq q\cap R=\wp$. Since $\wp'$ is a prime ideal of $R$ strictly contained in a height 1 prime ideal, and  $R$ is a domain, this implies that $\wp'$ is the zero ideal. Since E is an algebraic extension of F, this implies that $q'$ is the zero ideal of $S$. However, since $R$ is regular and $S$ is normal, Theorem B.12 in \parencite{leuschke2012cohen} implies that $\mathfrak{H}(S/R)$ is an ideal of pure height 1. So $q'$ must be a height 1 prime in $S$, which is a contradiction. Therefore, $q$ must be an associated prime of $\mathfrak{H}(S/R)$.

Since $S$ is Noetherian, $\mathfrak{H}(S/R)$ has only finitely many associated primes. Let $Q=\{q_1, q_2, \dots q_n\}$ be the subset of the set of associated primes of $\mathfrak{H}(S/R)$ consisting of all associated primes of $\mathfrak{H}(S/R)$ such that $\wp_i=q_i\cap R$ is a height 1 prime of $R$. Then $\{\wp_1, \wp_2, \dots, \wp_n\}$ is the set of all height 1 prime ideals of $R$ at which $E$ is ramified. 
\end{proof}

\subsubsection*{Division Algebra Splits Almost Everywhere} 
Similarly, the next lemma proves that if $R$ is an equicharacteristic complete regular local ring of dimension 2 with fraction field $F$ and algebraically closed residue field, and $D$ is a division algebra central over $F$, then $D$ splits over all but finitely many height 1 primes of $R$. We say that $D$ splits over a height 1 prime ideal $\wp$ of $R$ if $D\otimes_F F_\wp$ is trivial in the Brauer group of $F_\wp$, $Br(F_\wp)$, where $F_\wp$ is the fraction field of $\widehat{R_\wp}$. 

\begin{definition}\label{order}
Let $R$ be an integrally closed Noetherian domain, $F$ be the fraction field of $R$, and $A$ be a central simple algebra over $F$. We will call a subring $\Lambda$ of $A$ an $R$-order in $A$ if $\Lambda$ is a finitely generated torsion-free $R$-module, and the $F$-span of a set of $R$-generators of $\Lambda$ is $A$.
\end{definition}

\begin{lemma}\label{split_branches}
Let $R$ be an equicharacteristic complete regular local ring of dimension 2 with algebraically closed residue field. Let $F$ be the fraction field of $R$, and $D$ be a division algebra central over $F$. Then $D$ splits over all but finitely many height 1 prime ideals of $R$. 
\end{lemma}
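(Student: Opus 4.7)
The plan is to follow the homological-different strategy of Lemma \ref{unramified_branches}: pick an $R$-order $\Lambda \subset D$, show that $\mathfrak{H}(\Lambda/R)$ is a nonzero ideal of $R$ and hence contained in only finitely many height 1 primes, and then argue that at each remaining height 1 prime the algebra $D$ is in fact split (not merely unramified), using the triviality of the Brauer group of the residue field.

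First I would construct an $R$-order $\Lambda \subset D$ by picking an $F$-basis of $D$, closing it up under multiplication, and clearing denominators by an element of $R$; since $\Lambda$ is integral over $R$ and $R$ is integrally closed in $F$, one has $Z(\Lambda) = F \cap \Lambda = R$. Next, the homological different commutes with the localization $R \to F$, and $\mathfrak{H}(D/F) = F$ because $D$, being central simple, is separable over $F$; hence $\mathfrak{H}(\Lambda/R) \otimes_R F = F$, so $\mathfrak{H}(\Lambda/R)$ is a nonzero ideal of $R$ and, in the 2-dimensional Noetherian domain $R$, is contained in only finitely many height 1 primes. For any height 1 prime $\wp$ not containing $\mathfrak{H}(\Lambda/R)$, the localization $\Lambda_\wp$ is separable over $R_\wp$, free over the DVR $R_\wp$, and has center $R_\wp$; these properties force $\Lambda_\wp$ to be Azumaya over $R_\wp$, so its completion $\widehat{\Lambda_\wp} := \Lambda \otimes_R \widehat{R_\wp}$ is Azumaya over the Henselian local ring $\widehat{R_\wp}$, with generic fiber $D \otimes_F F_\wp$.

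To conclude, I would identify the residue field $\kappa$ of $\widehat{R_\wp}$ and invoke triviality of $Br(\kappa)$. By the Cohen structure theorem $R \cong k[[x,y]]$, so $R/\wp$ is a complete 1-dimensional local domain with residue field $k$; its integral closure is a complete DVR whose residue field is a finite extension of $k$, hence $k$ itself, so by Cohen it is isomorphic to $k[[t]]$. Thus $\kappa = \text{Frac}(R/\wp) = k((t))$. Since $\widehat{R_\wp}$ is Henselian with residue field $\kappa$, one has $Br(\widehat{R_\wp}) \cong Br(\kappa)$, and $Br(k((t))) = 0$ by Lang's $C_1$ theorem for complete discretely valued fields with algebraically closed residue field. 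Hence $\widehat{\Lambda_\wp} \cong M_n(\widehat{R_\wp})$ for some $n$, and therefore $D \otimes_F F_\wp \cong M_n(F_\wp)$ is split.

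The main obstacle I anticipate is the residue-field identification together with the invocation of Lang's theorem in positive residue characteristic; as a backup one could instead work with a maximal $R$-order and argue that its ramification divisor is a finite union of height 1 primes using the pure-height-$1$ property of the homological different (as in Theorem B.12 of \parencite{leuschke2012cohen} used in Lemma \ref{unramified_branches}), but the argument above parallels that lemma most closely.
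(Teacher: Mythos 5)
Your proposal is correct and follows essentially the same route as the paper: construct an $R$-order $\Lambda\subset D$, use the nonvanishing of $\mathfrak{H}(\Lambda/R)$ (via $\mathfrak{H}(D/F)=F$) to bound the bad height 1 primes, and at the remaining primes pass to the Azumaya algebra $\Lambda\otimes_R\widehat{R_\wp}$ and use injectivity of $\mathrm{Br}(\widehat{R_\wp})\to \mathrm{Br}(\kappa)$ together with Lang's $C_1$ theorem for $\kappa\cong k((t))$. The only divergence is your identification of the residue field $\kappa$ via the normalization of the complete local domain $R/\wp$, rather than the paper's choice of Cohen coordinates sending $x$ to a generator of $\wp$; your version is sound (and in fact handles height 1 primes not generated by a regular parameter without further comment).
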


\begin{proof}
First we note that $D$ contains an $R$-order, $\Lambda$, central over $R$ (see p.109 of \parencite{reiner2003maximal} for an explicit construction). We will begin by considering the separability of $\Lambda$ over height 1 prime ideals of $R$ and later show the connection to the splitting behavior of $D$. Given a prime ideal $\wp$ of $R$, $\Lambda\otimes_R R_\wp$ is separable over $R_\wp$ if and only if $\wp$ does not contain $\mathfrak{H}(\Lambda/R)$. This is due to Proposition 4.4 of \parencite{auslander1960brauer} which implies that $\mathfrak{H}(\Lambda\otimes_R R_\wp/R_\wp)=\mathfrak{H}(\Lambda/R)\otimes_R R_\wp$.

We first show that only finitely many height 1 prime ideals of $R$ can contain $\mathfrak{H}(\Lambda/R)$. Each height 1 prime ideal must either be an associated prime of $\mathfrak{H}(\Lambda/R)$, or strictly contain a minimal prime of $\mathfrak{H}(\Lambda/R)$. However, the zero ideal of $R$ is the only prime ideal strictly contained in a height 1 prime ideal. Therefore, as long as $\mathfrak{H}(\Lambda/R)$ is non-zero, the height 1 prime ideals containing $\mathfrak{H}(\Lambda/R)$ are all associated primes of $\mathfrak{H}(\Lambda/R)$ and there are only finitely many associated primes of $\mathfrak{H}(\Lambda/R)$ since R is Noetherian.

We see that $\mathfrak{H}(\Lambda/R)$ is not zero because $\mathfrak{H}(\Lambda\otimes_R F/F)=\mathfrak{H}(\Lambda/R)\otimes_R F$ (Proposition 4.4 in \parencite{auslander1960brauer}). Since $\Lambda\otimes_R F=D$, and $D$ is a separable $F$-algebra, 
\[\mathfrak{H}(\Lambda/R)\otimes_R F=\mathfrak{H}(\Lambda\otimes_R F/F)=\mathfrak{H}(D/F)=F.\]
Therefore, $\mathfrak{H}(\Lambda/R)\otimes_R F\neq 0$.

Now suppose $\wp$ is a height 1 prime ideal that does not contain $\mathfrak{H}(\Lambda/R)$. Then $\Lambda_\wp$ is a central separable $R_\wp$-algebra, which implies that $\widehat{\Lambda_\wp}:=\Lambda_\wp\otimes_{R_\wp}\widehat{R_\wp}=\Lambda\otimes_R \widehat{R_\wp}$ is a central separable $\widehat{R_\wp}$-algebra (see Lemma 2.5.1 of \parencite{de2006separable}). In this case the class of $\widehat{\Lambda_\wp}$, denoted $[\widehat{\Lambda_\wp}]$, is an element of the Brauer group of $\widehat{R_\wp}$. Since $\widehat{R_\wp}$ is a complete local ring, the canonical map from the Brauer group of $\widehat{R_\wp}$ to the Brauer group of its residue field is injective (Proposition IV.1.6 in \parencite{milne2016etale} ). Therefore we examine the residue field of $\widehat{R_\wp}$ and its Brauer group.\\
\indent By the Cohen structure theorem, $R\cong k[[x,y]]$ where $k$ is the residue field of $R$ and we can assume that the isomorphism maps $x$ to a generator of $\wp$, which is principal. So $R_\wp\cong k[[x,y]]_{(x)}$ and $\widehat{R_\wp}\cong k((y))[[x]]$. This implies that the residue field of $\widehat{R_\wp}$ is isomorphic to $k((y))$, and since $k$ is algebraically closed by assumption, $k((y))$ is a $C_1$-field. Therefore, the Brauer group of the residue field of $\widehat{R_\wp}$ is trivial (see Proposition 6.2.3 and Theorem 6.2.11 in \parencite{gille2017central}). As a result, Br$(\widehat{R_\wp})$ is trivial as well.

Since $\widehat{\Lambda_\wp}\otimes_{\widehat{R_\wp}} F_\wp\cong D\otimes_F F_\wp$, where $F_\wp$ is the fraction field of $\widehat{R_\wp}$, the class of $D\otimes_F F_\wp$ in Br$(F_\wp)$ is the image of the class of $\widehat{\Lambda_\wp}$ in Br$(\widehat{R_\wp})$ under the map induced by the inclusion of $\widehat{R_\wp}$ in $F_\wp$. Therefore $D$ splits over every height one prime ideal $\wp$ where the class of $\widehat{\Lambda_\wp}$ is the trivial element of Br$(\widehat{R_\wp})$, i.e. all the height 1 primes that do not contain the homological different, $\mathfrak{H}(\Lambda/R)$. So $D$ splits over all but finitely many height 1 prime ideals of $R$. 
\end{proof}

Now we show that a ring such as the one in the previous lemma has infinitely many distinct height 1 prime ideals. 

\begin{lemma}\label{R_structure}
Let $R$ be an equicharacteristic complete regular local ring of dimension 2. Then $R$ has infinitely many height 1 prime ideals.
\end{lemma}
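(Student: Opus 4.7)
The plan is to reduce to a concrete form of $R$ using the Cohen structure theorem and then exhibit an explicit infinite family of non-associate irreducibles. Since $R$ is equicharacteristic, complete, local of dimension 2 with residue field $k$, and regular, the Cohen structure theorem gives $R \cong k[[x,y]]$. As $R$ is a regular local ring it is in particular a unique factorization domain (Auslander--Buchsbaum), so every height one prime is principal, generated by an irreducible element, and two such ideals coincide if and only if their generators differ by a unit.

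It therefore suffices to produce infinitely many pairwise non-associate irreducible elements of $k[[x,y]]$. I would use the family $f_n := y - x^n$ for $n = 1, 2, 3, \ldots$. Irreducibility of each $f_n$ is clean: reducing modulo $(x)$ sends $f_n$ to $y$, which is prime in $k[[y]]$, so in any factorization $f_n = gh$ with $g,h$ non-units one of $g, h$ would have to become a unit modulo $(x)$, forcing it to be a unit in $k[[x,y]]$ (since the constant term is nonzero), a contradiction.

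For pairwise non-associatedness, suppose $(f_n) = (f_m)$ with $n > m$. Then $f_n = u f_m$ for some unit $u \in k[[x,y]]^\times$. Setting $y = 0$ yields $-x^n = -u(x,0)\, x^m$ in $k[[x]]$, hence $u(x,0) = x^{n-m}$; but $u(x,0)$ is a unit in $k[[x]]$, while $x^{n-m}$ is not, so $n = m$. Thus the ideals $(f_n)$ give infinitely many distinct height one primes of $R$.

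There is no serious obstacle here; the only mild care needed is to justify irreducibility of $y - x^n$ (handled by the mod $x$ reduction) and to make sure the non-associatedness argument does not accidentally use properties of $k$ that might fail in small characteristic — but evaluation at $y=0$ works uniformly over any field $k$, so the argument is characteristic-free and applies without change in the setting of the lemma.
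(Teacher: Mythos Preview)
Your proof is correct and follows essentially the same approach as the paper: reduce to $k[[x,y]]$ via the Cohen structure theorem and exhibit the explicit family $y - x^n$ (the paper uses the symmetric family $x + y^n$). The only cosmetic difference is that the paper justifies primality by observing each element is a regular parameter, whereas you invoke the UFD property and check irreducibility directly; both arguments are equally short, and your distinctness check via evaluation at $y=0$ is in fact more explicit than the paper's assertion.
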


\begin{proof}
Again, by the Cohen structure theorem, $R\cong k[[x,y]]$ where $k$ is the residue field of $R$ and the isomorphism can be chosen to send $x$ and $y$ to any given regular system of parameters in $R$. Therefore, we can just show that $k[[x,y]]$ has infinitely many height 1 prime ideals.\\
\indent We note that $m=(x,y)$ is the maximal ideal of $k[[x,y]]$ and it is clear that for any $n\in\mathbb{Z}_{\geq1}$, the ideal $(x+y^n,y)$ is also equal to $m$. Since this is a minimal way to generate the maximal ideal, we can see that for each $n$, $x+y^n$ is a regular parameter and therefore $(x+y^n)$ is a height 1 prime of $k[[x,y]]$. Since $(x+y^i)\neq(x+y^j)$ when $i\neq j$, these height 1 primes are distinct and there are infinitely many of them.
\end{proof}

\subsubsection*{Changing Coordinates}
The following proposition is the main result of this section which we will use to control the behavior of division algebras and maximal subfields such as those shown to exist in Section \ref{localad} over a chosen height 1 prime ideal. To obtain good behavior over a specific height 1 prime ideal we leverage the Cohen structure theorem and make a change of coordinates to transport good behavior from an arbitrary height 1 prime ideal. 

\begin{proposition}\label{rotationprop}
Let $R$ be an equicharacteristic complete regular local ring of dimension 2 with algebraically closed residue field, $F$ be the fraction field of $R$, and $D$ be a division algebra central over $F$ with a maximal subfield $E$ separable over $F$. Then, given a height 1 prime ideal $\wp$ of $R$, there exists a finite dimensional division algebra $D'$ over $F$ with maximal subfield $E'$ isomorphic to $E$ such that $E'$ is unramified at $\wp$ and $D'$ splits over $\wp$. 
\end{proposition}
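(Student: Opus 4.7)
The plan is to apply a suitable $k$-algebra automorphism $\sigma$ of $R$ to twist $D$ and $E$, thereby moving the finitely many height-1 primes of bad behavior off of $\wp$. By Lemmas \ref{unramified_branches} and \ref{split_branches}, the set $\Sigma$ of height-1 primes of $R$ at which $E$ ramifies or $D$ fails to split is finite. Given $\sigma \in \mathrm{Aut}_k(R)$, I will let $D^\sigma$ and $E^\sigma$ denote $D$ and $E$ with $F$-algebra structures precomposed by the induced automorphism $\sigma : F \to F$. Then $D^\sigma$ is a central $F$-division algebra isomorphic as a ring to $D$, and $E^\sigma \subseteq D^\sigma$ is a maximal subfield isomorphic as a field to $E$; unwinding Definition \ref{ramification} together with the definition of splitting at a prime shows that $E^\sigma$ is unramified at $\wp$ iff $E$ is unramified at $\sigma(\wp)$, and $D^\sigma$ splits at $\wp$ iff $D$ splits at $\sigma(\wp)$. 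So it suffices to produce $\sigma \in \mathrm{Aut}_k(R)$ with $\sigma(\wp) \notin \Sigma$.

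To construct such a $\sigma$, I use the Cohen structure theorem to identify $R \cong k[[x, y]]$ and split into two cases. If $\wp = (h)$ is generated by a regular parameter $h$, I choose the Cohen isomorphism so that $h$ corresponds to $x$; then for each $n \geq 1$ the map $\sigma_n : x \mapsto x + y^n,\ y \mapsto y$ is an automorphism of $R$ with $\sigma_n(\wp) = (x + y^n)$, and these primes are pairwise distinct by the argument in Lemma \ref{R_structure}, so $\sigma_n(\wp) \notin \Sigma$ for all sufficiently large $n$. If instead $\wp = (f) \subseteq \mathfrak{m}^2$, irreducibility of $f$ forces $x \nmid f$ and $y \nmid f$, so Weierstrass preparation on $x$ produces $\wp = (W)$ for a distinguished polynomial $W = x^d + c_{d-1}(y) x^{d-1} + \cdots + c_0(y)$ of degree $d \geq 2$ with $c_0(y) \neq 0$. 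For each $c \in k^{*}$ the $k$-algebra map $\sigma_c : x \mapsto cx,\ y \mapsto y$ is an automorphism of $R$, and after multiplying by the unit $c^{-d}$ one sees $\sigma_c(\wp) = (W_c)$ where $W_c := x^d + c^{-1} c_{d-1}(y) x^{d-1} + \cdots + c^{-d} c_0(y)$ is again distinguished. Since $W \neq x^d$, some coefficient $c_{d-i}(y)$ is nonzero; the uniqueness clause of Weierstrass preparation then forces $W_c = W_{c'}$ to imply $(c/c')^i = 1$, so every fiber of the map $c \mapsto \sigma_c(\wp)$ is finite. Because $k^{*}$ is infinite, the orbit $\{\sigma_c(\wp) : c \in k^{*}\}$ is infinite and almost all of its elements lie outside $\Sigma$.

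The principal obstacle is the second case, where $\wp$ is not generated by a regular parameter and I must exhibit an explicit infinite family of automorphisms moving $\wp$ to pairwise distinct primes; the uniqueness in the Weierstrass preparation theorem is exactly the input that distinguishes the twists $W_c$ from one another. Once $\sigma$ is in hand, verifying that $(D^\sigma, E^\sigma)$ has all the required properties (finite dimension, division, maximal subfield, separability, and the unramified/split behavior at $\wp$) is a routine unwinding of the definitions via the isomorphism $\sigma : R \to R$ and its extension to completions $\widehat{R_{\sigma(\wp)}} \cong \widehat{R_\wp}$.
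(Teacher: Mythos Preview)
Your argument is correct and rests on the same idea as the paper's: twist $(D,E)$ by a $k$-automorphism of $R$ so that the bad locus is moved away from $\wp$. The paper organizes this differently: it first fixes a single good prime $\tilde\wp$ (one where $E$ is unramified and $D$ splits) and then builds one automorphism carrying $\tilde\wp$ to $\wp$ by composing two Cohen presentations $k[[x,y]]\xrightarrow{\sim}R$, one sending $x$ to a generator of $\tilde\wp$ and the other sending $x$ to a generator of $\wp$. You instead produce an infinite family of automorphisms and show that the orbit of $\wp$ is infinite, hence meets the complement of the finite bad set $\Sigma$. Your route is in fact more careful on one point: the paper's construction of $\psi'$ tacitly assumes the generator of $\wp$ is a regular parameter (otherwise no Cohen isomorphism sends $x$ to it), so as written it only covers the case $\wp\not\subseteq\mathfrak m^2$. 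Your Case~2, using Weierstrass preparation and the scalings $\sigma_c$, handles the remaining height~1 primes and thus proves the proposition in the generality stated. For the application in the paper this extra care is not needed, since the $\wp$ arising there is the unique branch at a point where the closed fiber is regular and is therefore generated by a regular parameter; but your argument is the cleaner proof of the proposition as stated.

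One small wording issue: in Case~2 the claim ``irreducibility of $f$ forces $x\nmid f$ and $y\nmid f$'' should be ``irreducibility of $f$ together with $f\in\mathfrak m^2$ forces $x\nmid f$ and $y\nmid f$''; irreducibility alone would allow $f$ to be associate to $x$ or $y$.
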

\begin{proof}
First we note that if $E$ is unramified at $\wp$ and $D$ splits over $\wp$ then we're done. So we suppose now that this is not the case. By Lemma \ref{unramified_branches}, $E$ is unramified over all but finitely many height 1 prime ideals of $R$, and by Lemma \ref{split_branches}, $D$ is split over all but finitely many height 1 primes ideals of $R$. Since $R$ has infinitely many height 1 prime ideals, there exists a height 1 prime ideal of $R$, $\tilde{\wp}\neq\wp$, such that $E$ is unramified at $\tilde{\wp}$ and $D$ splits over $\tilde{\wp}$ simultaneously.

As in previous proofs we know there exists a ring isomorphism $\psi:k[[x,y]]\rightarrow R$ which maps $x$ to a generator of $\tilde{\wp}$. Similarly, there is an isomorphism $\psi':k[[x,y]]\rightarrow R$ which maps $x$ to a generator of $\wp$ and so by composing $\psi' \circ \psi^{-1}$ we obtain an automorphism of $R$ which maps a generator of $\tilde{\wp}$ to a generator of $\wp$.

This automorphism can be extended to an automorphism $\phi$ of $F$. We consider the algebra $D'=D\otimes_\phi F$. Here we regard $D'$ as an $F$-algebra via $\phi$, so the action of $F$ on $D'$ is defined by $c\cdot (x\otimes d)=x\otimes cd=x\phi^{-1}(c)\otimes d$ for $c,d \in F$ and $x\in D$. We note that $D'$ is isomorphic to $D$ as a ring via the map $d\mapsto d\otimes 1$ therefore it is still a division ring with center $F$. For this same reason $D'$ also has a maximal subfield isomorphic to $E$, $E'=E\otimes_\phi F\subset D'$.

However $D$ and $D'$ are not necessarily isomorphic as $F$-algebras, and the ring isomorphism described above does not respect the $F$-algebra structure. We show that $D'$ is a division algebra central over $F$ which splits over $\wp$ and that $E'$ is unramified at $\wp$.

We let $\varphi: E\rightarrow E'$ be the ring isomorphism defined by $e\mapsto e\otimes 1$ but consider $R$ and $F$ as subrings of $E'$ via the injection $c\mapsto 1\otimes c$ for all $c\in F$. We note that as subsets of $E'$ these are equal to $\varphi(R)$ and $\varphi(F)$ respectively. Therefore, the integral closure of $R$ in $E'$ is the integral closure of $\varphi(R)$ in $E'$ which is just $S'=\varphi(S)$ where $S$ is the integral closure of $R$ in $E$. Now we consider $\wp$ in $E'$ where we think of it inside $\varphi(R)$ as the set \[\{1\otimes p \mid p\in\wp\}=\{\phi^{-1}(p)\otimes1\mid p\in\wp\}=\{p'\otimes 1\mid p'\in\tilde{\wp}\}=\varphi(\tilde{\wp}),\] since $\phi^{-1}(\wp)=\tilde{\wp}$. So for any prime ideal $q'$ of $S'$ lying over $\wp$ we have $q'=\varphi(q)$ for some prime $q$ of $S$ lying over $\tilde{\wp}$. Then the map $R_\wp\rightarrow S'_{q'}$ is just the composition of $\varphi$ with the map $R_{\tilde{\wp}}\rightarrow S_q$, and so is an unramified local homomorphism of local rings. Therefore $E'$ is unramified at $\wp$.

Now we consider the class $D'\otimes F_\wp$ in Br$(F_\wp)$. We note first that the original automorphism of $R$ can be extended to an isomorphism $R_{\tilde{\wp}}\rightarrow R_\wp$ and this induces an isomorphism of the completions. Since the completions are domains, this can be extended to an isomorphism of the fraction fields and so we have an isomorphism $\overline{\phi}:F_{\tilde{\wp}}\rightarrow F_\wp$, which when restricted to $F$ considered as a subfield of each of these fields is just the automorphism $\phi$. Therefore, we have an isomorphism of Brauer groups Br$(F_{\tilde{\wp}})\rightarrow\text{Br}(F_\wp)$ given by $[A]\mapsto[A\otimes_{\overline{\phi}}F_\wp]$. Now we note that $D'\otimes_F F_\wp=D\otimes_\phi F\otimes_F F_\wp\cong D\otimes_F F_{\tilde{\wp}}\otimes_{\overline{\phi}} F_\wp$ as $F_\wp$- algebras via the map $d\otimes c\otimes e\mapsto d\otimes\phi^{-1}(c)\otimes e$. Since $[D\otimes_F F_{\tilde{\wp}}\otimes_{\overline{\phi}} F_\wp]$ is the image of $[D\otimes_F F_{\tilde{\wp}}]$ under the isomorphism of the Brauer groups described above, and $[D\otimes_F F_{\tilde{\wp}}]$ is trivial in Br$(F_{\tilde{\wp}})$, it's image must be trivial in Br$(F_\wp)$. Therefore, $D'$ splits over $\wp$. 
\end{proof} 

To summarize, Lemma \ref{pexistence} and Proposition \ref{rotationprop} imply the following proposition.

\begin{proposition}\label{specialcase}
Let $R$ be an equicharacteristic complete regular local ring of dimension 2 with algebraically closed residue field $k$ of characteristic $p>0$, and let $F$ be the fraction field of $R$. Suppose that $G$ is a finite p-group. Given a height 1 prime ideal $\wp$ of $R$, there exists a finite dimensional division algebra $D$ over $F$ with maximal subfield $E$ such that:
\begin{enumerate} 
	\item[i.]$E/F$ is Galois with group $G$,
	\item[ii.] $E$ is unramified at $\wp$, and
	\item[iii.] $D$ splits over $\wp$.
\end{enumerate}
\end{proposition}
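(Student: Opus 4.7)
The plan is to combine the two main results already established in this section: Lemma \ref{pexistence}, which establishes the existence of a $G$-admissible extension over $F$ without any control over ramification, and Proposition \ref{rotationprop}, which transports good behavior from an arbitrary height 1 prime to the chosen prime $\wp$ via a Cohen-structure coordinate change.

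First I would invoke Lemma \ref{pexistence} applied to $R$ and the $p$-group $G$: this yields a division algebra $D_0$ central over $F$ with a maximal subfield $E_0/F$ that is Galois with group $G$. In particular $E_0/F$ is separable, so the hypotheses of Proposition \ref{rotationprop} are satisfied. Applying that proposition to the triple $(D_0, E_0, \wp)$ produces a division algebra $D$ central over $F$ together with a maximal subfield $E$ which is ring-isomorphic to $E_0$, such that $E$ is unramified at $\wp$ and $D$ splits over $\wp$. Conditions (ii) and (iii) are then immediate, so the only remaining task is to check condition (i), that $E/F$ is Galois with group $G$.

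To verify (i) I would unpack the construction $E = E_0 \otimes_\phi F$ used in the proof of Proposition \ref{rotationprop}, where $\phi$ is the coordinate-change automorphism of $F$. The key observation is that the natural Galois action of $G$ on $E_0$ extends to an action on $E$ via $g(e \otimes c) := g(e) \otimes c$. This is well-defined on the twisted tensor product because $G$ fixes $F$ pointwise and therefore commutes with the balancing relation coming from $\phi$; it commutes with the $F$-algebra structure on $E$ (which acts on the second factor); and it is faithful because the original action on $E_0$ is faithful and the map $e \mapsto e \otimes 1$ is a ring isomorphism $E_0 \to E$. Since $E$ is a field of dimension $[E_0 : F] = |G|$ over $F$ on which $G$ acts faithfully by $F$-algebra automorphisms, a standard Galois-theoretic count forces $E^G = F$, so $E/F$ is $G$-Galois.

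I do not expect any substantive obstacle here; the proposition is essentially a packaging of the two preceding results. The only subtle piece of bookkeeping is verifying that the Galois structure survives the base change by $\phi$, which amounts to noting that an automorphism of the base field does not disturb a Galois group action on the extension.
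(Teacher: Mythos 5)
Your proposal is correct and is essentially the paper's own argument: the paper gives no separate proof of Proposition \ref{specialcase}, stating only that Lemma \ref{pexistence} and Proposition \ref{rotationprop} together imply it. Your additional verification that the $G$-Galois structure survives the twist $E_0\otimes_\phi F$ (since $G$ fixes $F$ pointwise, the action descends through the balancing relation and Artin's count gives $E^G=F$) correctly fills in a detail that the paper leaves implicit, as Proposition \ref{rotationprop} only asserts an abstract isomorphism $E'\cong E$.
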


\section{Establishing Compatibility}\label{compatibilitysection}

In this section we will describe how the local pieces constructed in the previous two sections can be put together to get a portion of a patching problem over a semi-global field. Throughout this section $T$ denotes a complete discrete valuation ring with uniformizer $t$, $K$ is the fraction field of $T$, and $F$ is a transcendence degree 1 field extension of $K$. Furthermore, $\text{char}(K)=\text{char}(k)=p>0$ where $k$ is the residue field of $T$.

Additionally, we suppose that there exists a semi-global patching framework $(\widehat{X},\mathscr{I},\mathscr{F})$ of $F$ as in Notation \ref{modelsystem} such that $\mathscr{P}$ contains at least two points, $P$ and $P'$, that satisfy the following conditions:
\begin{enumerate}
	\item[i.] one of the connected components of $X\backslash\mathscr{P}$ is an affine open set $U$ with $\overline{U}=U\cup\{P,P'\}$,
	\item[ii.] $U$ is the unique element of $\mathscr{W}$ such that $P\in\overline{U}$, and
	\item[iii.] there exists a different connected component of $X\backslash\mathscr{P}$, $U'$, such that $P'\in\overline{U'}$.
\end{enumerate}

With this patching framework there is a unique branch of $X$ at $P$, and we denote this branch by $\wp$. At $P'$ we have at least two branches of $X$; we denote the branch lying on $\overline{U}$ (resp. $\overline{U'}$) by $\wp_U$ (resp. $\wp_{U'}$). The containments of the fields $\{F_\xi\}_{\xi\in\{P,P',U,U',\wp,\wp_U,\wp_{U'}\}}$, as described in Notation \ref{modelsystem}, is summarized by the following diagram:

\begin{center}
\begin{tikzpicture}

    \node (Q1) at (0,0) {$F$};
    \node (Q2) at (-4.5,2) {$F_P$};
    \node (Q3) at (-1.5,2) {$F_U$};
    \node (Q4) at (1.5,2) {$F_{P'}$};
    \node (Q5) at (4.5,2) {$F_{U'}$};
    \node (Q6) at (-3,4) {$F_\wp$};
    \node (Q7) at (0,4) {$F_{\wp_U}$};
    \node (Q8) at (3,4) {$F_{\wp_{U'}}$};

    \draw (Q1)--(Q2);
    \draw (Q1)--(Q3);
    \draw (Q1)--(Q4);
    \draw (Q1)--(Q5);
    \draw (Q2)--(Q6);
    \draw (Q3)--(Q6);
    \draw (Q3)--(Q7);
    \draw (Q4)--(Q7);
    \draw (Q4)--(Q8);
    \draw (Q5)--(Q8);

\end{tikzpicture}
\end{center}

Given a division algebra over $F_P$ with a maximal subfield Galois over $F_P$ exhibiting good behavior over $F_\wp$ we will construct, in a series of steps detailed below, compatible algebraic structures over each of the remaining fields $F_\xi$ for $\xi\in\{P',U,U',\wp_U,\wp_{U'}\}$, working our way from left to right across the diagram. The existence of these compatible structures will be required in the proof of the main result as a portion of the data that makes up the patching problem.

First we record an auxiliary lemma that we will use a couple of times in this section. 

\begin{lemma}\label{restriction}
Let $A$ and $A'$ be central simple algebras over a field $F$ with $A\cong A'$ and $G$ be a finite group with $\text{deg}(A)=\vert G\vert$. Suppose we have $G$-Galois $F$-subalgebras of $A$ and $A'$ denoted $E$ and $E'$ respectively. If there is an isomorphism $\phi:E\rightarrow E'$ then there exists an isomorphism $\psi:A\rightarrow A'$ that restricts to $\phi$ on $E$.  
\end{lemma}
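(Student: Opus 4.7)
The plan is to reduce the statement to a form of Skolem--Noether for semisimple subalgebras. Since $A \cong A'$, fix any $F$-algebra isomorphism $\theta : A \to A'$ and consider the two $F$-algebra embeddings of $E'$ into $A'$: the inclusion $\iota : E' \hookrightarrow A'$, and the composite $\sigma := \theta|_E \circ \phi^{-1} : E' \to A'$. Both realize $E'$ as an \'etale $F$-subalgebra of $A'$ of dimension $|G| = \deg A'$. If one can produce $u \in (A')^\times$ satisfying $\sigma(x) = u x u^{-1}$ for all $x \in E'$, then $\psi(a) := u^{-1} \theta(a) u$ defines an $F$-algebra isomorphism $A \to A'$, and for any $e \in E$ a direct computation yields
\[
\psi(e) = u^{-1} \theta(e) u = u^{-1} \sigma(\phi(e)) u = u^{-1} \bigl( u \, \phi(e) \, u^{-1} \bigr) u = \phi(e),
\]
so $\psi|_E = \phi$ as desired.

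To produce such a $u$ I would argue in two steps using the decomposition $E' \cong \prod_i L_i'$ into finite separable field extensions of $F$ with primitive idempotents $e_i'$. First, a comparison over a splitting field of $A'$ shows that $\iota(e_i')$ and $\sigma(e_i')$ are idempotents of the same rank $[L_i':F]$, so the right $A'$-modules $\iota(e_i') A'$ and $\sigma(e_i') A'$ are isomorphic. Assembling isomorphisms on each summand produces a right $A'$-linear automorphism of $A'$, which is left multiplication by some $u_0 \in (A')^\times$; a short computation using the orthogonality of the idempotent systems $\{\iota(e_i')\}$ and $\{\sigma(e_i')\}$ shows that the summand isomorphisms can be normalized so that $u_0 \iota(e_i') u_0^{-1} = \sigma(e_i')$ for every $i$.

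After replacing $\iota$ by its $u_0$-conjugate, I may assume $\iota(e_i') = \sigma(e_i')$ for every $i$. Both embeddings then restrict to $F$-algebra homomorphisms $L_i' \to e_i' A' e_i'$, where $e_i' A' e_i'$ is a central simple $F$-algebra of degree $[L_i':F]$ (being Morita equivalent to $A'$) in which $L_i'$ appears as a maximal subfield. The classical Skolem--Noether theorem then supplies an element $v_i \in (e_i' A' e_i')^\times$ that conjugates $\iota|_{L_i'}$ to $\sigma|_{L_i'}$, and the orthogonality of the $e_i'$ guarantees that $v := \sum_i v_i$ is invertible in $A'$ (with inverse $\sum_i v_i^{-1}$) and simultaneously conjugates $\iota$ to $\sigma$. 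Setting $u := v u_0$ completes the construction of the desired conjugator.

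I expect the main obstacle to be the idempotent-matching step. While the existence of the right $A'$-linear automorphism $a \mapsto u_0 a$ reduces to a rank count, ensuring that $u_0$ actually intertwines the idempotents themselves (and not merely their right ideals) requires a careful choice of the summand isomorphisms together with the orthogonality relations among the $\iota(e_i')$ and $\sigma(e_i')$. Once that step is done, the remainder of the argument is an assembly of classical Skolem--Noether inside each corner $e_i' A' e_i'$. The $G$-Galois structures on $E$ and $E'$ play no explicit role beyond ensuring that these algebras are \'etale of the correct dimension $|G| = \deg A = \deg A'$; in particular, $\phi$ need not be $G$-equivariant for the conclusion to hold.
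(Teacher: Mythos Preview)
Your argument is correct and constitutes a genuine alternative to the paper's proof. The paper proceeds by observing that a $G$-Galois $F$-algebra is a Frobenius algebra (verified explicitly via the $G$-trace $l(x)=\sum_{\sigma\in G}\sigma(x)$ and the separability-idempotent identity $\sum_i \sigma(x_i)y_i=\delta_{\sigma,1}$), and then invokes Jacobson's extension theorem (Theorems~2.2.2 and~2.2.3 in \emph{Finite-Dimensional Division Algebras over Fields}) to the effect that any isomorphism between Frobenius subalgebras of dimension equal to $\deg A'$ extends to an inner automorphism of $A'$. Your route instead decomposes the \'etale algebra into its field factors, matches the primitive idempotents by a module-theoretic rank argument, and then applies the classical Skolem--Noether theorem inside each corner $e_i'A'e_i'$; in effect you are reproving the special case of Jacobson's theorem needed here by elementary means. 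Your version has the advantage of relying only on classical Skolem--Noether rather than a black-box citation, and, as you observe, it makes transparent that the $G$-action is irrelevant beyond guaranteeing \'etaleness and the dimension condition. The paper's version is shorter once one is willing to quote Jacobson.

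One remark: the ``obstacle'' you anticipate in the idempotent-matching step is not actually there. If $u_0\in (A')^\times$ is chosen so that left multiplication by $u_0$ carries $\iota(e_i')A'$ onto $\sigma(e_i')A'$ for every $i$, then from $u_0\iota(e_i')\in\sigma(e_i')A'$ and the orthogonality $\sigma(e_j')\sigma(e_i')=\delta_{ij}\sigma(e_i')$ one gets $\sigma(e_j')u_0\iota(e_i')=\delta_{ij}u_0\iota(e_i')$; summing over $i$ yields $\sigma(e_j')u_0=u_0\iota(e_j')$ directly, with no normalization required. Likewise, your assertion that $\deg(e_i'A'e_i')=[L_i':F]$ follows because $E'$ is self-centralizing in $A'$ (being maximal \'etale), whence $L_i'$ is self-centralizing in the corner and hence a maximal subfield there; this is the point that makes the rank count go through.
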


\begin{proof}
There is some isomorphism $\psi':A\rightarrow A'$ and we will show that $\psi'$ can be adjusted to obtain an isomorphism which restricts to $\phi$. First we show that a $G$-Galois $F$-algebra is a Frobenius algebra as defined on page 42 of \parencite{jacobson2009finite}. We have a linear map $l:E\rightarrow F$ given by $l(x)=\sum_{\sigma\in G}\sigma(x)$, and the corresponding bilinear map $l(x,y):=l(xy)$. This map is non-degenerate since there exist $x_1,x_2,\dots x_n, y_1,y_2,\dots y_n \in E$ such that for every $\sigma\in G$, $\sum_{i=1}^{n} \sigma(x_i)y_i = \delta_{\sigma,1}$ (see Proposition III.1.2 in \parencite{de2006separable}). So if $x\in E$ such that $l(x,y)=0$ for all $y\in E$, then 
\begin{align*}
x&=x(\sum_{i=1}^n x_i y_i)=x(\sum_{i=1}^n x_i y_i)+\sum_{\sigma\neq 1}\sigma(x)(\sum_{i=1}^{n} \sigma(x_i)y_i)\\
&=\sum_{i=1}^{n}(\sum_{\sigma\in G}\sigma(x x_i))y_i=\sum_{i=1}^{n} l(x,x_i)y_i=0.
\end{align*}
Since this bilinear form is non-degenerate, we know that the kernel of $l$ is a hyperplane of $E$. Additionally, $\text{ker}(l)$ contains no nontrivial ideals. Suppose for instance that $I$ is an ideal of $E$ contained in $\text{ker}(l)$, then if $x\in I$, for any $y\in E$, $xy\in I\subset \text{ker}(l)$ so $l(x,y)=l(xy)=0$ and so $x=0$.

Similarly, $\psi'(E)$ is also Frobenius and we see that $\phi\circ\psi'^{-1}$ defines an isomorphism of $\psi'(E)$ into $A'$. Since $\text{dim}(\psi'(E))=\vert G\vert=\text{deg}(A')$ Theorems 2.2.2 and 2.2.3 in \parencite{jacobson2009finite} imply that this isomorphism can be extended to an inner automorphism of $A'$, let $\alpha:A'\rightarrow A'$ denote this inner automorphism. Then we see that $\psi:=\alpha\circ\psi':A\rightarrow A'$ is an isomorphism such that the following diagram commutes: 
\[
	\begin{tikzcd}
E \arrow{r}{\phi} \arrow[swap]{d} & E' \arrow{d} \\
A \arrow{r}{\psi} & A'.
	\end{tikzcd}
\] 
\end{proof}

\begin{lemma}\label{compatibility1}
Suppose $D$ is a division algebra over $F_P$ with maximal subfield $E$ such that the following conditions are satisfied:
\begin{enumerate}
\item[i.] $E/F$ is a Galois field extension with $Gal(E/F)=G$ where $\vert G\vert=p^m$,
\item[ii.] E is unramified over $\wp$, and
\item[iii.] D splits over $F_\wp$.
\end{enumerate}

Then there exists a central simple algebra over $F_U$, $A_U$, containing a $G$-Galois $F_U$-subalgebra, $E_U$, such that $A_U\otimes_{F_U}F_\wp\cong D\otimes_{F_P}F_\wp$ as $F_\wp$-algebras and the isomorphism restricts to an isomorphism of $G$-Galois $F_\wp$-algebras, $E_U\otimes_{F_U}F_\wp\cong E\otimes_{F_P}F_\wp$.
\end{lemma}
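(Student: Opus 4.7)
The plan is to use generic Galois theory together with Krasner's lemma (Lemma~\ref{krasners}) to manufacture $E_U$ over $F_U$ directly, modeling $E\otimes_{F_P}F_\wp$, and then to realize $A_U$ concretely as the endomorphism algebra of $E_U$, matching structures over $F_\wp$ via Lemma~\ref{restriction}.

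First I would produce $E_U$ as a $G$-Galois $F_U$-algebra. By Theorem~\ref{genericgalois}, the $G$-Galois $F_P$-algebra $E$ corresponds to a homomorphism $f\colon R(G)\to F_P$ with $E\cong S(G)\otimes_f F_P$. Composing with $F_P\hookrightarrow F_\wp$ yields $\tilde f\colon R(G)\to F_\wp$ realizing $E\otimes_{F_P}F_\wp\cong S(G)\otimes_{\tilde f}F_\wp$. Applying Lemma~\ref{krasners} over the complete valued field $F_\wp$ produces $\delta>0$ such that any homomorphism $R(G)\to F_\wp$ sending each $x_i$ within branch-valuation distance $\delta$ of $\tilde f(x_i)$ yields an isomorphic $G$-Galois $F_\wp$-algebra. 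Using that $F_U$ is dense in $F_\wp$ for the $\wp$-adic valuation (a standard feature of the semi-global patching framework), I pick $b_i\in F_U$ within $\delta$ of $\tilde f(x_i)$, define $g\colon R(G)\to F_U$ by $g(x_i)=b_i$, and set $E_U:=S(G)\otimes_g F_U$. Since $F_U$ has characteristic $p$, Theorem~\ref{genericgalois} guarantees that $E_U$ is a $G$-Galois $F_U$-algebra, and by construction we obtain a fixed isomorphism $\varphi\colon E_U\otimes_{F_U}F_\wp\xrightarrow{\sim} E\otimes_{F_P}F_\wp$ of $G$-Galois $F_\wp$-algebras.

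Next I would set $A_U:=\operatorname{End}_{F_U}(E_U)$, with $E_U$ embedded via its left regular action on itself. Since $E_U$ is a free $F_U$-module of rank $n:=|G|$, this realizes $A_U$ as $M_n(F_U)$, a split central simple $F_U$-algebra of degree $n$ containing $E_U$ as a $G$-Galois $F_U$-subalgebra. Because $D$ splits over $F_\wp$ and has degree $n$, both $A_U\otimes_{F_U}F_\wp$ and $D\otimes_{F_P}F_\wp$ are isomorphic to $M_n(F_\wp)$, so there is an abstract $F_\wp$-algebra isomorphism between them. Applying Lemma~\ref{restriction} with this isomorphism, together with the previously constructed $\varphi$ on the $G$-Galois $F_\wp$-subalgebras, upgrades it to an $F_\wp$-algebra isomorphism $A_U\otimes_{F_U}F_\wp\cong D\otimes_{F_P}F_\wp$ restricting to $\varphi$, as required.

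The main obstacle is the first step: carrying out the Krasner approximation inside $F_U$ rather than merely inside $F_\wp$. This rests on density of $F_U$ in $F_\wp$ in the branch valuation, which is part of the ambient patching setup but should be invoked with care; the rest of the proof is then essentially formal, since $A_U$ is automatically split over $F_\wp$ (matching $D$) and the $G$-Galois $F_\wp$-subalgebras are isomorphic by design.
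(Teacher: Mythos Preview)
There is a genuine gap in the first step: the claim that $F_U$ is dense in $F_\wp$ for the $\wp$-adic valuation is false. The valuation ring of $F_\wp$ is $\widehat{R_\wp}$ with residue field $k(\wp)$, whereas the elements of $F_U$ that are $\wp$-integral reduce into the function field $k(U)=\operatorname{Frac}(\widehat{R_U}/\eta)$, and $k(U)\subsetneq k(\wp)$ (concretely, $k(x)$ versus $k((x))$ in the model case $\widehat{R_P}=k[[x,t]]$, $\wp=(t)$). A subfield of a complete discretely valued field whose induced residue field is strictly smaller can never be dense, so there is no reason your approximants $b_i\in F_U$ exist. A further symptom is that your argument never invokes hypothesis~(ii); unramifiedness of $E$ at $\wp$ is precisely what the paper needs to make the construction go through.

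The paper's proof repairs this by descending to the residue field. Using~(ii), the integral closure $B_\wp$ of $\widehat{R_\wp}$ in $E\otimes_{F_P}F_\wp$ is a $G$-Galois $\widehat{R_\wp}$-algebra, hence realized by some $f_{B_\wp}\colon R(G)\to \widehat{R_\wp}$. Reducing mod the maximal ideal gives a $G$-Galois $k(\wp)$-algebra, and here the relevant density \emph{does} hold: $\widehat{R_U}/\eta$ is dense in $k(\wp)$ (for the valuation on $k(\wp)$ centered at $P$). Krasner's lemma is applied over $k(\wp)$, the resulting parameters are lifted to $\widehat{R_U}$, and the Henselian property of $\widehat{R_\wp}$ (equivalence of finite \'etale algebras over $\widehat{R_\wp}$ and over $k(\wp)$) promotes the residue-level isomorphism to $B_U\otimes_{\widehat{R_U}}\widehat{R_\wp}\cong B_\wp$. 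Tensoring with $F_U$ gives $E_U$. Your second step, building $A_U$ as a split algebra of degree $p^m$ containing $E_U$ and then invoking Lemma~\ref{restriction}, is fine and matches the paper (which uses the trivial crossed product $\Delta(E_U,G,1)$ in place of $\operatorname{End}_{F_U}(E_U)$; these are isomorphic).
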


\begin{proof}
We begin by constructing $E_U$. We note that 
$$E\otimes_{F_P}F_\wp=\widehat{E_{q_1}}\times\widehat{E_{q_2}}\times\dots\times\widehat{E_{q_l}},$$
where $\widehat{E_{q_i}}$ is the completion of $E$ with respect to the valuation corresponding to a prime $q_i$ lying over $\wp$ in $E$ and $\{q_1, q_2, \dots q_l\}$ is the set of all such prime ideals. This is explained in chapter 18 of \parencite{pierce1982associative}. We let $B_i$ be the integral closure of $\widehat{R_\wp}$ in $\widehat{E_{q_i}}$ and set $B_\wp:=B_1\times B_2\times\dots\times B_l$. We note that $B_\wp$ is a $G$-Galois $\widehat{R_\wp}$-algebra as a result of assumptions (i) and (ii). Additionally, $E\otimes_{F_P}F_\wp\cong B_\wp\otimes_{\widehat{R_\wp}}F_\wp$. 

Since the inclusion of $F_U$ in $F_\wp$ is the extension of the canonical inclusion $\widehat{R_U}\hookrightarrow \widehat{R_\wp}$, it is enough to construct a $G$-Galois $\widehat{R_U}$-algebra, $B_U$, such that $B_U\otimes_{\widehat{R_U}}\widehat{R_\wp}\cong B_\wp$. Then we can set $E_U:=B_U\otimes_{\widehat{R_U}}F_U$.

Following the notation of Saltman, let $R(G)$ denote $\mathbb{F}_p[x_1,x_2,\ldots,x_m]$. Since $G$ is a finite $p$-group of order $p^m$, and char$(\widehat{R_\wp})=p$, there exists a generic $G$-Galois $R(G)$-algebra, $S(G)$, and a ring homomorphism $f_{B_\wp}: R(G)\rightarrow \widehat{R_\wp}$ such that $B_\wp\cong S(G)\otimes_{f_{B_\wp}} \widehat{R_\wp}$ as $G$-Galois $\widehat{R_\wp}$-algebras (Theorem \ref{genericgalois}). We can compose $f_{B_\wp}$ with the quotient map from $\widehat{R_\wp}$ to the residue field of $\widehat{R_\wp}$, $k(\wp)$, to obtain a $G$-Galois $k(\wp)$-algebra$, S(G)\otimes_{\overline{f_{B_\wp}}}k(\wp)$. Setting $\overline{B_\wp}:=B_\wp\otimes_{\widehat{R_\wp}}k(\wp)$ we see that,
\[S(G)\otimes_{\overline{f_{B_\wp}}}k(\wp)\cong (S(G)\otimes_{f_{B_\wp}} \widehat{R_\wp})\otimes_{\widehat{R_\wp}}k(\wp)\cong B_\wp\otimes_{\widehat{R_\wp}}k(\wp)=\overline{B_\wp}.\] 

Now, $k(\wp)$ is a characteristic $p$ field complete with respect to a nonarchimedean real-valued valuation $v$. So by Lemma \ref{krasners} there exists a $\delta >0$ such that if $b_i\in k(\wp)$ satisfy $\norm{b_i-\overline{f_{B_\wp}}(x_i)}_v<\delta$, then the new ring homomorphism $f':R(G)\rightarrow k(\wp)$ defined by setting $f'(x_i)=b_i$ has the property that $S(G)\otimes_{\overline{f_{B_\wp}}} k(\wp)\cong S(G)\otimes_{f'} k(\wp)$ as $G$-Galois $k(\wp)$-algebras. 

If we let $\eta$ denote the radical ideal of $(t)$ in $\widehat{R_U}$, we note that $\widehat{R_U}/\eta$ is dense in $k(\wp)$ (see Notation 2.2 in \parencite{harbater2019local}), so there exist $\overline{b_i}\in k(\wp)\cap\widehat{R_U}/\eta$ such that $\norm{\overline{b_i}-f_{\overline{B_\wp}}(x_i)}<\delta$ and so there exist a homomorphism $f'_{\overline{B_\wp}}:R(G)\rightarrow k(\wp)$ so that $S(G)\otimes_{f'_{\overline{B_\wp}}}k(\wp)\cong S(G)\otimes_{f_{\overline{B_\wp}}}k(\wp)\cong \overline{B_\wp}$ as $G$-Galois $k(\wp)$-algebras.

For each $\overline{b_i}\in \widehat{R_U}/\eta$ we pick a representative $b_i$ in $\widehat{R_U}$ and define a ring homomorphism $f_U:R(G)\rightarrow \widehat{R_U}$ by mapping each $x_i$ to $b_i$. Therefore we have the $G$-Galois $\widehat{R_U}$-algebra, $B_U:=S(G)\otimes_{f_U}\widehat{R_U}$. By construction, $(B_U\otimes_{\widehat{R_U}}\widehat{R_U}/\eta)\otimes_{\widehat{R_U}/\eta}k(\wp)\cong \overline{B_\wp}$. Additionally, $(B_U\otimes_{\widehat{R_U}}\widehat{R_U}/\eta)\otimes_{\widehat{R_U}/\eta}k(\wp)\cong (B_U\otimes_{\widehat{R_U}}\widehat{R_\wp})\otimes_{\widehat{R_\wp}}k(\wp)$ because the following square commutes:

\[
	\begin{tikzcd}
\widehat{R_U} \arrow{r} \arrow[swap]{d} & \widehat{R_\wp} \arrow{d} \\
\widehat{R_U}/\eta \arrow{r} & k(\wp).
	\end{tikzcd}
\] 
Since $\widehat{R_\wp}$ is Henselian, the category of finite étale $\widehat{R_\wp}$-algebras is equivalent to the category of finite étale $k(\wp)$-algebras via the functor $B\mapsto \overline{B}:=B\otimes_{\widehat{R_\wp}}k(\wp)$ (see Proposition 1.4.4 in \parencite{milne2016etale}). Since $G$-Galois $R$-algebras are also finite étale $R$-algebras, $(B_U\otimes_{\widehat{R_U}}\widehat{R_\wp})\otimes_{\widehat{R_\wp}}k(\wp)\cong \overline{B_\wp}$ implies that $B_U\otimes_{\widehat{R_U}}\widehat{R_\wp}\cong B_\wp$ as $G$-Galois $\widehat{R_\wp}$-algebras. Therefore, if we let $E_U:=B_U\otimes_{\widehat{R_U}}F_U$, then we see that 
\[E_U\otimes_{F_U}F_\wp=(B_U\otimes_{\widehat{R_U}}F_U)\otimes_{F_U}F_\wp\cong B_\wp\otimes_{\widehat{R_\wp}}F_\wp\cong E\otimes_F F_\wp.\] 

Now we can build a matrix algebra over $F_U$ containing $E_U$ by using a crossed-product construction. As described in \parencite{auslander1960brauer} (in the discussion immediately following the proof of Theorem A.9) we can form the crossed-product algebra $A_U:=\Delta(E_U, G, 1)$ (in \parencite{auslander1960brauer} denoted $\Delta(1; E_U; G)$) which can be thought of as $\bigoplus_{\sigma\in G} E_Uu_\sigma$ where multiplication is defined by $(au_\sigma)(bu_\tau)=a\sigma(b)u_{\sigma\tau}$. The class $[\Delta(E_U, G, 1)]$ is trivial in the Brauer group of $F_U$ so as an $F_U$-algebra it is isomorphic to $M_{p^m}(F_U)$ and we can view $E_U$ as a $G$-Galois $F_U$-subalgebra of $A_U$ by the isomorphism $\iota:E_U\rightarrow E_Uu_{id}$. 

By assumption (iii), $D\otimes_{F_P}F_\wp\cong M_{p^m}(F_\wp)\cong A_U\otimes_{F_U}F_\wp$. Since we also have an isomorphism $\phi:E_U\otimes_{F_U}F_\wp\rightarrow E\otimes_F F_\wp$ where $E_U\otimes_{F_U}F_\wp$ is a $G$-Galois $F_\wp$-subalgebra of $A_U\otimes_{F_U}F_\wp$ and $E\otimes_F F_\wp$ is a $G$-Galois $F_\wp$-subalgebra of $D\otimes_{F_P}F_\wp$ with $\vert G\vert=\text{deg}(A_U\otimes_{F_U}F_\wp)=p^m$, by Lemma \ref{restriction} we know there exists an isomorphism $\psi:A_U\otimes_{F_U}F_\wp\rightarrow D\otimes_{F_P}F_\wp$ such that the following diagram commutes:
\[
	\begin{tikzcd}
E_U\otimes_{F_U}F_\wp \arrow{r}{\phi} \arrow[swap]{d}{\iota\otimes id} & E\otimes_{F_P}F_\wp \arrow{d} \\
A_U\otimes_{F_U}F_\wp \arrow{r}{\psi} & D\otimes_{F_P}F_\wp
	\end{tikzcd}
.\] 

\end{proof}

\begin{lemma}\label{compatibility2}
Suppose $A_U$ is a split central simple algebra over $F_U$ containing a $G$-Galois $F_U$-subalgebra $E_U$ where $\vert G\vert=p^m$. Then there exists a central simple algebra $A_{P'}$ over $F_{P'}$ with a $G$-Galois $F_{P'}$-subalgebra $E_{P'}$ such that:
\begin{enumerate}
	\item[i.] $A_{P'}\otimes_{F_{P'}}F_{\wp_U}\cong A_U\otimes_{F_U}F_{\wp_U}$ and the isomorphism restricts to an isomorphism of $G$-Galois $F_{\wp_U}$-algebras, $E_{P'}\otimes_{F_{P'}}F_{\wp_U}\cong E_U\otimes_{F_U}F_{\wp_U}$
	\item[ii.] $A_{P'}\otimes_{F_{P'}}F_{\wp_{U'}}\cong M_{p^m}(F_{\wp_{U'}})$ and the isomorphism restricts to an isomorphism of $G$-Galois $F_{\wp_{U'}}$-algebras, $E_{P'}\otimes_{F_{P'}}F_{\wp_{U'}}\cong F_{\wp_{U'}}^{p^m}$.
\end{enumerate}
\end{lemma}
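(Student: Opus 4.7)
The plan is to construct $E_{P'}$ first via generic Galois theory and then take $A_{P'}$ to be the crossed product $\Delta(E_{P'}, G, 1)$, just as in the proof of Lemma \ref{compatibility1}. This crossed product is always split, so $A_{P'} \cong M_{p^m}(F_{P'})$ and contains $E_{P'}$ as a $G$-Galois $F_{P'}$-subalgebra via the natural inclusion $\iota: E_{P'} \to E_{P'} u_{id}$. Once $E_{P'}$ is shown to have the correct behavior at each branch, both central simple algebra isomorphisms will follow because $A_U \otimes_{F_U} F_{\wp_U}$ is split of degree $p^m$ (since $A_U$ itself is split by hypothesis), and hence abstractly isomorphic to $A_{P'} \otimes_{F_{P'}} F_{\wp_U}$; similarly at $\wp_{U'}$. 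Lemma \ref{restriction} will then adjust these abstract algebra isomorphisms so that they restrict to the prescribed $G$-Galois isomorphisms on the Galois subalgebras.

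To construct $E_{P'}$, I would parameterize both target $G$-Galois algebras using Theorem \ref{genericgalois}. Since $E_U \otimes_{F_U} F_{\wp_U}$ is a $G$-Galois $F_{\wp_U}$-algebra and $F_{\wp_{U'}}^{p^m}$ is a $G$-Galois $F_{\wp_{U'}}$-algebra, both in characteristic $p$, there exist homomorphisms $f_{\wp_U}: R(G) \to F_{\wp_U}$ and $f_{\wp_{U'}}: R(G) \to F_{\wp_{U'}}$ realizing them. Lemma \ref{krasners}, applied over each branch field (both of which are complete discretely valued fields of characteristic $p$), then produces tolerances $\delta_U, \delta_{U'} > 0$. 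It suffices to find elements $c_1, \ldots, c_m \in F_{P'}$ satisfying $\norm{c_i - f_{\wp_U}(x_i)}_{v_U} < \delta_U$ in $F_{\wp_U}$ and $\norm{c_i - f_{\wp_{U'}}(x_i)}_{v_{U'}} < \delta_{U'}$ in $F_{\wp_{U'}}$ for every $i$; setting $f_{P'}(x_i) := c_i$ and $E_{P'} := S(G) \otimes_{f_{P'}} F_{P'}$ then yields a $G$-Galois $F_{P'}$-algebra whose base change along each branch inclusion is isomorphic as a $G$-Galois algebra to the prescribed target, again by Lemma \ref{krasners}.

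The main obstacle is producing these elements $c_i$, which requires a weak approximation argument in $F_{P'}$ at the two branch valuations. Since $\wp_U$ and $\wp_{U'}$ are distinct height 1 prime ideals of $\widehat{R_{P'}}$, they induce inequivalent discrete rank-1 valuations $v_U, v_{U'}$ on $F_{P'}$, whose completions are precisely $F_{\wp_U}$ and $F_{\wp_{U'}}$. Artin--Whaples weak approximation for distinct rank-1 valuations, combined with the density of $F_{P'}$ in each completion, implies that the diagonal image of $F_{P'}$ is dense in $F_{\wp_U} \times F_{\wp_{U'}}$ under the product topology, which is exactly what is needed to produce the $c_i$. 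With $E_{P'}$ constructed in this way, forming the crossed product $A_{P'} := \Delta(E_{P'}, G, 1)$ and invoking Lemma \ref{restriction} over each branch, exactly as in the final paragraph of the proof of Lemma \ref{compatibility1}, will yield the compatible isomorphisms required by parts (i) and (ii).
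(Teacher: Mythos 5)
Your proposal is correct and follows essentially the same route as the paper: parameterize the two target $G$-Galois branch algebras via Theorem \ref{genericgalois}, use Lemma \ref{krasners} together with simultaneous density of $F_{P'}$ in $F_{\wp_U}$ and $F_{\wp_{U'}}$ (the paper cites Bourbaki's approximation theorem for independent valuations where you cite Artin--Whaples, which is the same point) to build $E_{P'}$, then take the split crossed product $\Delta(E_{P'},G,1)$ and fix the isomorphisms with Lemma \ref{restriction}. No gaps.
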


\begin{proof}
As in the previous proof, we construct $E_{P'}$ first. Note that since $E_{\wp_U}:=E_U\otimes_{F_U}F_{\wp_U}$ is a $G$-Galois $F_{\wp_U}$-algebra and $\text{char}(F_{\wp_U})=p$, the existence of a generic $G$-Galois extension $S(G)/R(G)$ (Theorem \ref{genericgalois}) again implies that there exists a ring homomorphism $f_{E_{\wp_U}}:R(G)\rightarrow F_{\wp_U}$ such that $E_{\wp_U}\cong S(G)\otimes_{f_{E_{\wp_U}}}F_{\wp_U}$. Recall that $R(G)=\mathbb{F}_p[x_1,x_2,\dots,x_m]$. Similarly, since $F_{\wp_{U'}}^{p^m}$ is a $G$-Galois $F_{\wp_{U'}}$-algebra, there exists a ring homomorphism $f_{E_{\wp_{U'}}}:R(G)\rightarrow F_{\wp_{U'}}$ such that $F_{\wp_{U'}}^{p^m}\cong S(G)\otimes_{f_{E_{\wp_{U'}}}}F_{\wp_{U'}}$ as $G$-Galois $F_{\wp_{U'}}$-algebras.

Now we note that $F_{\wp_U}$ is the completion of $F_{P'}$ with respect to a height one discrete valuation (with valuation ring $(\widehat{R_{P'}})_{\wp_U}$) and $F_{\wp_{U'}}$ is the completion of $F_{P'}$ with respect to a different height one discrete valuation (with valuation ring $(\widehat{R_{P'}})_{\wp_{U'}}$). Since these are distinct height one valuations on $F_{P'}$ they are independent and so $F_{P'}$ is simultaneously dense in $F_{\wp_U}$ and $F_{\wp_{U'}}$ (see Theorem VI.7.2.1 of \parencite{bourbaki1972commutative}). 

Lemma \ref{krasners} implies that there exists a $\delta_1>0$ such that if $b_i\in F_{\wp_U}$ satisfy $\norm{b_i-f_{E_{\wp_U}}(x_i)}_{\wp_U}<\delta_1$, then the ring homomorphism $f':R(G)\rightarrow F_{\wp_U}$ defined by setting $f'(x_i)=b_i$ has the property that $S(G)\otimes_{f_{E_{\wp_U}}}F_{\wp_U}\cong S(G)\otimes_{f'} F_{\wp_U}$ as $G$-Galois $F_{\wp_U}$-algebras. Similarly, there exists a $\delta_2>0$ such that if $c_i\in F_{\wp_{U'}}$ satisfy $\norm{c_i-f_{E_{\wp_{U'}}}(x_i)}_{\wp_{U'}}<\delta_2$, then the ring homomorphism $f'':R(G)\rightarrow F_{\wp_{U'}}$ defined by setting $f''(x_i)=c_i$ has the property that $S(G)\otimes_{f_{E_{\wp_{U'}}}}F_{\wp_{U'}}\cong S(G)\otimes_{f''} F_{\wp_{U'}}$ as $G$-Galois extensions of $F_{\wp_{U'}}$.

Since $F_{P'}$ is simultaneously dense in $F_{\wp_U}$ and $F_{\wp_{U'}}$, letting $\delta=\text{min}\{\delta_1,\delta_2\}$, we know that for all $1\leq i\leq m$, there exists $d_i\in F_{P'}$ such that $\norm{d_i-f_{E_{\wp_U}}(x_i)}_{\wp_U}<\delta_1$ and $\norm{d_i-f_{E_{\wp_{U'}}}(x_i)}_{\wp_{U'}}<\delta_2$. We define a ring homomorphism $f_{P'}:R(G)\rightarrow F_{P'}$ by setting $f_{P'}(x_i)=d_i$ and this homomorphism can also be thought of as a ring homomorphism from $R(G)$ to $F_{\wp_U}$ or $F_{\wp_{U'}}$.

By our choice of $\delta$ we have $G$-equivariant isomorphisms, 
\[S(G)\otimes_{f_{P'}}F_{\wp_U}\cong S(G)\otimes_{f_{E_{\wp_U}}}F_{\wp_U},\text{ and }S(G)\otimes_{f_{P'}}F_{\wp_{U'}}\cong S(G)\otimes_{f_{E_{\wp_{U'}}}}F_{\wp_{U'}}.\] 

Setting $E_{P'}:=S(G)\otimes_{f_{P'}}F_{P'}$ this tells us that when we extend scalars up to each of the branch fields we get $G$-equivariant isomorphisms $E_{P'}\otimes_{F_{P'}}F_{\wp_U}\cong E_U\otimes_{F_U}F_{\wp_U}$ and $E_{P'}\otimes_{F_{P'}}F_{\wp_U}\cong F_{\wp_{U'}}^{p^m}$. 

Now we set $A_{P'}:=\Delta(E_{P'}, G, 1)=\bigoplus_{\sigma\in G} E_{P'}u_\sigma$, where multiplication is again defined by $(au_\sigma)(bu_\tau)=a\sigma(b)u_{\sigma\tau}$. Since $A_{P'}$ and $A_U$ are both split, we have that $A_{P'}\otimes_{F_{P'}}F_{\wp_U}\cong A_U\otimes_{F_U}F_{\wp_U}$ and $A_{P'}\otimes_{F_{P'}}F_{\wp_{U'}}\cong M_{p^m}(F_{\wp_{U'}})$. Identifying $E_{P'}$ with $E_{P'}u_{id}$ shows that $E_{P'}$ is a $G$-Galois $F_{P'}$-subalgebra of $A_{P'}$. We also have $G$-equivariant isomorphisms $\phi_U:E_U\otimes_{F_U}F_{\wp_U}\rightarrow E_{P'}\otimes_{F_{P'}}F_{\wp_U}$, and $\phi_{U'}:F_{\wp_{U'}}^{p^m}\rightarrow E_{P'}\otimes_{F_{P'}}F_{\wp_{U'}}$ as explained above. Therefore, as in the proof of Lemma \ref{compatibility1}, we can use Lemma \ref{restriction} to conclude that there exist isomorphisms $\psi_U$ and $\psi_{U'}$  that restrict to the isomorphisms $\phi_U$ and $\phi_{U'}$ respectively on the given $G$-Galois subalgebras, i.e. such that the following two diagrams both commute: 

\[
	\begin{tikzcd}
E_U\otimes_{F_U}F_{\wp_U} \arrow{r}{\phi_U} \arrow[swap]{d}{\iota\otimes id} & E_{P'}\otimes_{F_{P'}}F_{\wp_U} \arrow{d} & \text{and} & F_{\wp_{U'}}^{p^m} \arrow{r}{\phi_{U'}} \arrow[swap]{d}{diag} & E_{P'}\otimes_{F_{P'}}F_{\wp_{U'}} \arrow{d} \\
A_U\otimes_{F_U}F_{\wp_U} \arrow{r}{\psi_U} & A_{P'}\otimes_{F_{P'}}F_{\wp_U} & & M_{p^m}(F_{\wp_{U'}}) \arrow{r}{\psi_{U'}} & A_{P'}\otimes_{F_{P'}}F_{\wp_{U'}}
	\end{tikzcd}
.\] 

\end{proof}

\section{Main Theorem}\label{mainsection}

We will now prove the main theorem using a patching argument which follows the strategy employed by Harbater, Hartmann and Krashen in section 4 of \parencite{harbater2011patching}.

In the proof we will use induced algebras so we record the definition of an induced algebra in this context below for convenience.

\begin{definition}\label{inducedalg}
If $F$ is a field, $G$ is a finite group with subgroup $H$, and $E$ is an $H$-Galois $F$-algebra, then we define the induced $G$-Galois $F$-algebra, $\text{Ind}_H^G E$, as follows. Let $(G:H)=m$ and fix $C=\{\sigma_1, \sigma_2,\dots, \sigma_m\}$, a set of left coset representatives. As an $F$-algebra, $\text{Ind}_H^G E$ is the direct product of $m$ copies of $E$ indexed by the elements of $C$. To define the $G$-action we note that given $\sigma\in G$, and $\sigma_i\in C$, we have $\sigma\sigma_i\in\sigma_{j(i)}H$ for some unique coset $\sigma_{j(i)}H$ depending on $i$. So there is some $h_i\in H$ such that $\sigma\sigma_i=\sigma_{j(i)}h_i$ and we define \[\sigma\cdot (c_{\sigma_1}, c_{\sigma_2},\dots,c_{\sigma_m}):=(x_{\sigma_1}, x_{\sigma_2}, \dots, x_{\sigma_m}),\] where $x_{\sigma_{j(i)}}=h_i\cdot c_{\sigma_i}$. This action permutes the entries by coset and acts within each copy of $E$ by the original $H$-action.
\end{definition}

\begin{theorem}\label{mainresult}
Let $K$ be a complete discretely valued field with algebraically closed residue field $k$, and let F be a finitely generated field extension of K with transcendence degree 1. Assume further that $\text{char}(K)=\text{char}(k)$. A finite group $G$ is admissible over $F$ if and only if the $q$-Sylow subgroups of $G$ are abelian of rank at most 2 for every prime $q\neq \text{char}(k)$. 
\end{theorem}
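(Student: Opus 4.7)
The necessity direction follows from the Harbater--Hartmann--Krashen restrictions, so I focus on sufficiency. Let $G$ be a group whose $q$-Sylow subgroup $G_q$ is abelian of rank at most $2$ for each prime $q \neq p := \text{char}(k)$, with no restriction on the $p$-Sylow $G_p$. The strategy is to construct, via Theorem~\ref{patchingequivalence}, a central simple $F$-algebra $A$ of degree $|G|$ together with a $G$-Galois $F$-subalgebra $L \subset A$ of maximal rank; a local index computation will then force $A$ to be a division algebra $D$, whence $L$ is automatically a field and the admissibility of $G$ follows.

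Choose a regular $T$-model $\widehat{X}$ of $F$ and, by blowing up if necessary, a finite set $\mathscr{P}$ of closed points such that (i) there exist $P, P' \in \mathscr{P}$ arranged as in Section~\ref{compatibilitysection} ($P$ with unique branch $\wp$ onto a component $U$ whose only other boundary point is $P'$, and $P'$ having a further branch onto a second component $U'$), and (ii) for each prime $q \mid |G|$ with $q \neq p$, there is a point $P_q \in \mathscr{P}$ at which $X$ is regular and whose unique branch lies on a component disjoint from $\{P, U, P', U'\}$. At $P$, use Proposition~\ref{specialcase} to obtain a $p$-Sylow adequate extension $L_p \subset D_p$ with $L_p$ unramified at $\wp$ and $D_p$ split at $\wp$. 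Apply Lemma~\ref{compatibility1} to produce a compatible split pair $(A_U, E_U)$ on $U$ (in the $G_p$-Galois category), then Lemma~\ref{compatibility2} at $P'$ to produce a compatible split pair $(A_{P'}, E_{P'})$ matching the $U$-data on $F_{\wp_U}$ and becoming fully split on $F_{\wp_{U'}}$. At each $P_q$ with $q \neq p$, use Lemma~\ref{sylowblocks} to obtain $L_q \subset D_q$ with $L_q$ split on its branch.

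Now induce everything up to $G$: at $P$ take $(M_{[G:G_p]}(D_p),\, \text{Ind}_{G_p}^G L_p)$; on $U$ take $(M_{[G:G_p]}(A_U),\, \text{Ind}_{G_p}^G E_U)$; at $P'$ take $(M_{[G:G_p]}(A_{P'}),\, \text{Ind}_{G_p}^G E_{P'})$; at each $P_q$ take $(M_{[G:G_q]}(D_q),\, \text{Ind}_{G_q}^G L_q)$; at every remaining point or component $\xi$ take the trivial data $(M_{|G|}(F_\xi),\, F_\xi^{|G|})$. Since induction and matrix inflation commute with extension of scalars, compatibility on each branch follows from Lemmas~\ref{compatibility1}, \ref{compatibility2}, \ref{sylowblocks}, and triviality on the remaining branches; Lemma~\ref{restriction} ensures that the CSA and Galois-subalgebra isomorphisms can be chosen compatibly. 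Theorem~\ref{patchingequivalence} then yields a central simple $F$-algebra $A$ of degree $|G|$ with $G$-Galois $F$-subalgebra $L$. At each $P_q$ (including $q = p$), $A \otimes_F F_{P_q}$ has index $|G_q|$, so the index of $A$ is divisible by every $|G_q|$ and hence by $|G| = \prod_q |G_q|$; combined with $\deg A = |G|$, $A$ is a division algebra $D$, and $L$ is a field because any commutative étale $F$-subalgebra of $D$ of maximal $F$-dimension must be a field.

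The main obstacle is the construction and propagation of compatible $p$-Sylow data. Unlike in the tame case handled by Lemma~\ref{sylowblocks}, the $p$-Sylow division algebra does not directly split on its branch, so the data cannot simply be glued to trivial data on a neighboring component. The coordinate change from Section~\ref{rotation} inside the proof of Proposition~\ref{specialcase} arranges good behavior at the chosen branch, and the generic-Galois-theory arguments of Lemmas~\ref{compatibility1} and \ref{compatibility2} propagate this data along the chain $P \to U \to P'$ until it becomes fully split at $F_{\wp_{U'}}$, after which trivial or $q$-Sylow data can be attached freely everywhere else in the framework.
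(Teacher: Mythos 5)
Your proposal is correct and follows essentially the same route as the paper: arrange the model by blow-up so that the $p$-Sylow data from Proposition~\ref{specialcase} at $P$ can be propagated along $P\to U\to P'$ via Lemmas~\ref{compatibility1} and~\ref{compatibility2} until it is split, place the tame Sylow data from Lemma~\ref{sylowblocks} at other regular points, induce up to $G$ with trivial patches elsewhere, patch via Theorem~\ref{patchingequivalence}, and conclude by the local index (lcm) computation that the patched algebra is division. The only cosmetic difference is your extra requirement that the tame points' components be disjoint from $U'$, which is unnecessary (their data split on the branches, so they glue to trivial data on any neighboring component) but harmless.
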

\begin{proof}
In the case that $\text{char}(k)\nmid \vert G\vert$, this result is precisely Theorem 4.5 in \parencite{harbater2011patching}, and in the case $\text{char}(k)=p\mid \vert G\vert$ the forward direction is implied by Proposition 3.5 in \parencite{harbater2011patching}. Therefore we only need to prove that the conditions placed on the $q$-Sylow subgroups of $G$ are sufficient in the $\text{char}(k)=p\mid \vert G\vert$ case. 

First we set some notation. Let $p_1, p_2, \dots, p_r$ be all of the distinct primes dividing $n:=\vert G\vert$ and order them so that $p_1=p$. So we can write $n=p_1^{\alpha_1}p_2^{\alpha_2}\cdots p_l^{\alpha_l}$, where $\alpha_i>0$. For each prime $p_i$, we fix a $p_i$-Sylow subgroup $H_i$ of $G$.

We begin with an outline of the proof that follows. First we obtain a good model of $F$ and check that it satisfies the conditions imposed in Section \ref{compatibilitysection}. Then we build an adequate extension for each Sylow subgroup over a point in the model, for the $p$-Sylow subgroup this will be done over a special point in the model. We will then invoke results from Section \ref{compatibilitysection} to build compatible structures on a portion of the model. In the next step we explain how to fill in the rest of the model with trivial patches for the maximal separable subalgebras and show that the induced structures over all of the fields in our framework are compatible and therefore form a patching problem. We patch these together to obtain a $G$-Galois $F$-algebra which we call $L$. This process is repeated with the central simple algebras that contain the maximal separable algebras. Finally, the patched central simple algebra over $F$, which we call $A$, is shown to be a division algebra which then implies that $L$ is a maximal subfield of $A$, completing the proof.\\

\noindent\textit{Establishing a Good Model}\\

Let $\widehat{X}$ be a regular model of $F$ and let $X$ denote the closed fiber of $\widehat{X}$. For each $1\leq i\leq l$ we pick a distinct closed point on $X$, $P_i$, at which $X$ is regular. Now let $b:\widehat{X}_B\rightarrow \widehat{X}$ be the blow-up of $\widehat{X}$ at $P_1$. We note that $\widehat{X}_B$ is also a regular model of $F$.

We let $X_B$ denote the closed fiber of $\widehat{X}_B$ and note that $X_B$ is the union of $b^{-1}(X\backslash\{P_1\})$ and $b^{-1}(P_1)$ where $b^{-1}(X\backslash\{P_1\})\cong X\backslash\{P_1\}$ and $b^{-1}(P_1)\cong \mathbb{P}_{k(P_1)}^1$. We also know that the strict transform of $X$ in $\widehat{X}_B$, $W:=\overline{b^{-1}(X\backslash\{P_1\})}$, is isomorphic to $X$. This is because we can apply Proposition IV-21 in \parencite{eisenbud2006geometry} to the closed subscheme $X$ of $\widehat{X}$. The proposition implies that $b\vert_W:W\rightarrow X$ is the blow-up of $X$ at $P_1$. Since $P_1$ is already an effective Cartier divisor of $X$, the universal property of the blow-up implies that $b\vert_W$ is an isomorphism.
 
So, for each $1\leq i\leq l$, we let $P'_i$ denote the unique point in $W$ that is mapped to $P_i$ by $b$. Each $P'_i$ is a distinct closed point on $X_B$, and $X_B$ is regular at $P'_i$ for $2\leq i\leq l$. Note that $P'_1$ is a point on $b^{-1}(P_1)$ and we can pick another closed point, $P$ on $b^{-1}(P_1)$ different from $P'_1$. 

We let $\mathscr{P}$ be the set consisting of all points on $X_B$ at which irreducible components of $X_B$ meet, as well as the points $P'_1, P'_2, \dots, P'_l$ and $P$. Then we have the set $\mathscr{W}$ of all connected components of $X_B\backslash \mathscr{P}$, and the set $\mathscr{B}$ of all branches of $X_B$ at the points in $\mathscr{P}$ as described in Notation \ref{modelsystem}. At each $\xi\in I:=\mathscr{P}\cup\mathscr{W}\cup\mathscr{B}$ we also have the associated field $F_\xi$, and ring $\widehat{R_\xi}$, also defined in Notation \ref{modelsystem}.\\

\noindent\textit{Building Sylow Adequate Extensions}\\

For $2\leq i\leq l$, $P'_i$ is a closed point at which $X_B$ is regular. Therefore, there is a unique branch $\wp_i$ of $X_B$ at $P'_i$. We also know that $F$ (and therefore $F_{P'_i}$) contains a primitive $p_i^{\alpha_i}$-th root of unity. This is because $p_i\neq \text{char}(k)$ and $k$ is algebraically closed. By assumption, $H_i$, a $p_i$-Sylow subgroup of $G$, is an abelian group of rank at most 2 and so there exists a field extension $E_{P'_i}/F_{P'_i}$ and a division algebra $D_{P'_i}$ central over $F_{P'_i}$ containing $E_{P'_i}$ as a maximal subfield such that $E_{P'_i}/F_{P'_i}$ is $H_i$-Galois and $E_{P'_i}\otimes_{F_{P'_i}}F_{\wp_i}$ is split. The constructions of these extensions and division algebras is described in detail in the proof of proposition 4.4 in \parencite{harbater2011patching}.

Now we note that $X_B$ is also regular at $P$ and so there is a unique branch of $X_B$ at $P$ which we call $\wp$. We know that $\widehat{R_P}$ is an equicharacteristic complete regular local ring of dimension 2 with residue field $k(P)$. Since $H_1$ is a $p$-Sylow subgroup of $G$ it is a finite p-group, so by Proposition \ref{specialcase} there exists an $H_1$-Galois field extension $E_P/F_P$ and a division algebra $D_P$ that contains $E_P$ as a maximal subfield such that $E_P$ is unramified at $\wp$ and $D_P$ splits over $\wp$.\\

\noindent\textit{Compatibility on the Exceptional Fiber}\\

Since $b^{-1}(P_1)\cong \mathbb{P}_{k(P_1)}^1$, and $\mathscr{P}\cap b^{-1}(P_1)=\{P,P'_1\}$, we know that $b^{-1}(P_1)\backslash \{P,P'_1\}$ (which is isomorphic to the affine line over $k(P_1)$ with one point removed) is a connected component of $X_B\backslash \mathscr{P}$. In other words, $U:=b^{-1}(P_1)\backslash \{P,P'_1\}$ is an element of $\mathscr{W}$. Since $P'_1$ is in the closure of $U$, there is a branch of $X_B$ at $P'_1$ lying along $U$, which we denote $\wp_U$. 

We observe that $P'_1$ is contained in the closure of exactly one other connected component of $X_B\backslash \mathscr{P}$. This is because every element of $\mathscr{W}$ other than $U$ is contained in $W$ and is therefore isomorphic to a connected component of $X\backslash\{b(Q):Q\in\mathscr{P}\cap W\}$. So if $U'$ is an element of $\mathscr{W}$ with $P'_1\in\overline{U'}$ and $U'\neq U$, then $b(U')$ is a connected component of $X\backslash\{b(Q):Q\in\mathscr{P}\cap W\}$ with $P_1\in\overline{b(U)}$, with the closure taken in $X$. Since $X$ is regular at $P_1$, there is only one such component. We let $U'$ denote the unique element of $\mathscr{W}$ described above, and we let $\wp_1$ denote the branch of $X_B$ at $P'_1$ along $U'$. This discussion illustrates that the semi-global patching framework $(\widehat{X}_B, \mathscr{I},\mathscr{F})$ satisfies the conditions outlined in the beginning of Section \ref{compatibilitysection}. The notation we use is only slightly different from that used in Section \ref{compatibilitysection}, with $P'_1$ corresponding to $P'$ and $\wp_1$ corresponding to $\wp_{U'}$ in that section.

By the lemmas in Section \ref{compatibilitysection}, we have $H_1$-Galois algebras $E_U$ and $E_{P'_1}$ over $F_U$ and $F_{P'_1}$ respectively. These are embedded inside algebras $C_U:=\Delta(E_U, H_1, 1)$ and $C_{P'_1}:=\Delta(E_{P'_1}, H_1, 1)$ which are central simple algebras over the respective fields. Furthermore, these central simple algebras and the designated subalgebras satisfy the compatibility conditions over $F_\wp$ that were outlined in those lemmas. We let $E_\wp:=E_P\otimes_{F_P}F_\wp$ and $E_{\wp_U}:=E_{P'_1}\otimes_{F_{P'_1}}F_{\wp_U}$, and we note that $E_{P'_1}$ splits over $\wp_1$ by construction, so $E_{P'_1}\otimes_{F_{P'_1}}F_{\wp_1}\cong F_{\wp_1}^{\vert H_1\vert}$.\\

\noindent\textit{Filling in Trivial Patches}\\

For all of the remaining fields in the patching framework (i.e. for all $F_\xi$ with $\xi\in I\backslash\{P'_1, P'_2, \dots ,P'_l, P, U, \wp,\wp_U\}$) we set $E_\xi:=F_\xi$. For consistency, we will use $H_\xi$ to denote the subgroup of $G$ associated to the $F_\xi$-algebra $E_\xi$ for all $\xi\in I$. So $H_{P'_i}=H_i$ for all $1\leq i\leq l$, additionally $H_P=H_U=H_\wp=H_{\wp_U}=H_1$ and for all other $\xi\in I$, $H_\xi=1$. Now we define for all $\xi\in I$ a $G$-Galois $F_\xi$-algebra by setting $L_\xi:=\text{Ind}_{H_\xi}^G E_\xi$ as in Definition \ref{inducedalg}. 

Recall that given $\xi\in \mathscr{B}$, $\xi$ is a branch of $X_B$ at a point $Q\in\mathscr{P}$ over a connected component $V\in\mathscr{W}$, so we can associate to $\xi$ a unique element $(Q,V)\in\mathscr{P}\times\mathscr{W}$. To patch the $G$-Galois $F_\xi$-algebras $L_\xi$ defined above, and obtain a $G$-Galois $F$-algebra, we just need to check that they are compatible in the following sense. For each $\xi\in\mathscr{B}$, we show that there exist $G$-equivariant $F_\xi$-algebra isomorphisms $\phi_{Q,\xi}:L_Q\otimes_{F_Q}F_\xi\rightarrow L_\xi$ and $\phi_{V,\xi}:L_V\otimes_{F_V}F_\xi\rightarrow L_\xi$, where $(Q,V)$ is the element of $\mathscr{P}\times\mathscr{W}$ associated to $\xi$.

In the case that $\xi\in\mathscr{B}\backslash\{\wp,\wp_U\}$, we know that the associated pair $(Q,V)\in\mathscr{P}\times\mathscr{W}$ is such that $Q\neq P$ and $V\neq U$ so we have an $H_Q$-equivariant isomorphism $\phi_{Q,\xi}^E: E_Q\otimes_{F_Q}F_\xi\rightarrow F_\xi^{\vert H_Q\vert}=\text{Ind}_{1}^{H_Q}F_\xi$ which induces a $G$-equivariant isomorphism $\tilde{\phi}_{Q,\xi}^E:\text{Ind}_{H_Q}^G(E_Q\otimes_{F_Q}F_\xi)\rightarrow \text{Ind}_{H_Q}^G(\text{Ind}_1^{H_Q}F_\xi)$. When we compose these maps with the isomorphisms
\[\text{Ind}_{H_Q}^G(E_Q\otimes_{F_Q}F_\xi)\cong (\text{Ind}_{H_Q}^G E_Q)\otimes_{F_Q}F_\xi\text{ and }\text{Ind}_{H_Q}^G(\text{Ind}_1^{H_Q}F_\xi)\cong \text{Ind}_1^G F_\xi\cong F_\xi^{\vert G\vert},\] 
we get an isomorphism $\phi_{Q,\xi}: L_Q\otimes_{F_Q}F_\xi\rightarrow L_\xi$. Similarly, $\phi_{V,\xi}^E:E_V\otimes_{F_V}F_\xi\xrightarrow{\sim} F_\xi$ so we have an induced $G$-equivariant isomorphism $\tilde{\phi}_{V,\xi}^E:\text{Ind}_1^G (E_V\otimes_{F_V}F_\xi) \rightarrow \text{Ind}_1^G F_\xi$ which gives a $G$-equivariant isomorphism $\phi_{V,\xi}:L_V\otimes_{F_V}F_\xi\rightarrow L_\xi$.

In the case that $\xi=\wp$, which is associated to $(P,U)$, we have that $E_P\otimes_{F_P}F_\wp=E_\wp$ by definition. We take $\phi_{P,\wp}$ to be the map induced by the identity map since, $L_\wp=\text{Ind}_{H_1}^G E_\wp$. Additionally, the compatibility established in Lemma \ref{compatibility1} implies that $E_U\otimes_{F_U}F_\wp\cong E_\wp$. This isomorphism of $F_\wp$-algebras is $H_1$-equivariant so the induced isomorphism $\phi_{U,\wp}:L_U\otimes_{F_U}F_\wp\rightarrow L_\wp$ is $G$-equivariant. 

The same argument holds for $\xi=\wp_U$, associated to $(P'_1,U)$, where the $H_1$-equivariant isomorphism $E_U\otimes_{F_U}F_{\wp_U}\cong E_{P'_1}\otimes_{F_{P'_1}}F_{\wp_U}=E_{\wp_U}$ is established in Lemma \ref{compatibility2}. Therefore by Lemma 4.2 in \parencite{harbater2011patching}, there exists a $G$-Galois $F$-algebra, $L$, with the property that for all $\xi\in I$, $L\otimes_F F_\xi\cong L_\xi$ (as $G$-Galois $F_\xi$-algebras) in a manner compatible with the maps $\phi_{Q,\xi}$ and $\phi_{V,\xi}$ for each $\xi\in\mathscr{B}$ associated to $(Q,V)$.\\

\noindent\textit{Repeated Process with CSA's}\\

Now for each $\xi\in I$ we construct a central simple algebra over $F_\xi$. For $\xi\in\{P'_2, P'_3, \dots, P'_l, P\}$ we set $A_\xi:=\text{Mat}_{(G:H_\xi)}(D_\xi)$. Since $E_\xi$ is a maximal commutative subalgebra of $D_\xi$, the subalgebra of $A_\xi$ consisting of all diagonal matrices with entries in $E_\xi$ is a maximal commutative subalgebra of $A_\xi$. Furthermore, this subalgebra is isomorphic to $E_\xi^{(G:H_\xi)}$, which we give the structure of a $G$-Galois $F_\xi$-algebra by defining the $G$-action on $E_\xi^{(G:H_\xi)}$ to be the $G$-action on $\text{Ind}_{H_\xi}^G E_\xi=L_\xi$. Therefore we have a $G$-equivariant injective homomorphism $i_\xi:L_\xi\rightarrow A_\xi$ such that the image of $i_\xi$ is the subalgebra of $A_\xi$ of diagonal matrices with entries in $E_\xi$.  

Similarly, for $\xi\in\{P'_1, U\}$, we set $A_\xi:=\text{Mat}_{(G:H_\xi)}(C_\xi)$ and again we have an embedding of $L_\xi$ in $A_\xi$ as the set of diagonal matrices with entries in $E_\xi$, a maximal commutative subalgebra. For $\xi=\wp$ we set $A_\wp:=\text{Mat}_{(G:H_\wp)}(D_P\otimes_{F_P}F_\wp)$ where again $L_\wp=\text{Ind}_{H_\wp}^G E_\wp$ can be embedded in $A_\wp$ as a maximal commutative subalgebra by mapping to diagonal matrices with entries in $E_\wp=E_P\otimes_{F_P}F_\wp$. Similarly, we set $A_{\wp_U}:=\text{Mat}_{(G:H_{\wp_U})}(C_{P'_1}\otimes_{F_{P'_1}}F_{\wp_U})$ and $L_{\wp_U}$ is embedded in $A_{\wp_U}$ as diagonal matrices with entries in $E_{\wp_U}$. 

For all $\xi\in I\backslash\{P'_1, P'_2, \dots ,P'_l, P, U, \wp,\wp_U\}$ we set $A_\xi:=\text{Mat}_n (F_\xi)$ and we can embed $L_\xi=F_\xi^{n}$ in $A_\xi$ as the subalgebra of diagonal matrices. In all of the cases above we have defined an embedding, $\iota_\xi:L_\xi\hookrightarrow A_\xi$, as a maximal commutative subalgebra of $A_\xi$. 

We now need to show that for every $\xi=(Q,V)\in\mathscr{B}$, we have compatibility isomorphisms $\psi_{Q,\xi}:A_Q\otimes_{F_Q}F_\xi\rightarrow A_\xi$ and $\psi_{V,\xi}:A_V\otimes_{F_V}F_\xi\rightarrow A_\xi$ such that the following diagrams commute:

\[
	\begin{tikzcd}
L_Q\otimes_{F_Q}F_\xi \arrow{r}{\phi_{Q,\xi}} \arrow[swap]{d}{\iota_Q\otimes id} & L_\xi \arrow{d}{\iota_\xi} & \text{ and } & L_V\otimes_{F_V}F_\xi \arrow{r}{\phi_{V,\xi}} \arrow[swap]{d}{\iota_V\otimes id} & L_\xi \arrow{d}{\iota_\xi} \\
A_Q\otimes_{F_Q}F_\xi \arrow{r}{\psi_{Q,\xi}} & A_\xi & & A_V\otimes_{F_V}F_\xi \arrow{r}{\psi_{V,\xi}} & A_\xi
	\end{tikzcd}
\] 

Following the argument made in Lemma 4.2 of \parencite{harbater2011patching} we first establish that isomorphisms of $F_\xi$-algebras, $\tilde{\psi}_{Q,\xi}:A_Q\otimes_{F_Q}F_\xi\rightarrow A_\xi$ and $\tilde{\psi}_{V,\xi}:A_V\otimes_{F_V}F_\xi\rightarrow A_\xi$, exist for each $\xi=(Q,V)\in\mathscr{B}$. The existence of $\tilde{\psi}_{Q,\xi}$ follows from the fact that $A_Q$ is defined to be a matrix algebra with entries in a central simple algebra over $F_Q$ which splits over $F_\xi$. Therefore $A_Q\otimes_{F_Q}F_\xi$, which is isomorphic to a matrix algebra with entries in a matrix algebra over $F_\xi$, is a split central simple algebra over $F_\xi$ of degree $n$. So $A_Q\otimes_{F_Q}F_\xi$ is isomorphic to $\text{Mat}_n(F_\xi)$. We now observe that $A_\xi$ is also a split central simple algebra of degree $n$ for every $\xi\in \mathscr{B}$. Recall that $A_\xi$ is constructed as a matrix algebra over a split central simple algebra in such a way that the degree is $n$. Since we have isomorphisms, 
\[A_Q\otimes_{F_Q}F_\xi\cong \text{Mat}_n(F_\xi)\cong A_\xi,\]
there exists an isomorphism $\tilde{\psi}_{Q,\xi}:A_Q\otimes_{F_Q}F_\xi\rightarrow A_\xi$. Similarly, $A_V$ is also a degree $n$ split central simple algebra over $F_V$ for each $V\in\mathscr{W}$, and so we also have an isomorphism $\tilde{\psi}_{V,\xi}:A_V\otimes_{F_V}F_\xi\rightarrow A_\xi$.

Having established the existence of the isomorphisms $\tilde{\psi}_{Q,\xi}$, and $\tilde{\psi}_{V,\xi}$, for every $\xi=(Q,V)\in I$, we can now follow the rest of the argument made in Lemma 4.2 of \parencite{harbater2011patching}. We observe that since $L_\xi$ is a $G$-Galois $F_\xi$-algebra, it is a Frobenius algebra. Via $\iota_\xi$ we can view $L_\xi$ as a Frobenius subalgebra of $A_\xi$ of dimension $n$, equal to the degree of $A_\xi$. Now we consider the isomorphism $\tilde{\psi}_{Q,\xi}\circ(\iota_Q\otimes id)\circ\phi^{-1}_{Q,\xi}$ of $L_\xi$ into $A_\xi$. By Theorems 2.2.2 and 2.2.3 in \parencite{jacobson2009finite}, this isomorphism can be extended to an inner automorphism of $A_\xi$, which we call $\alpha_{Q,\xi}$. If we set $\psi_{Q,\xi}:=\alpha^{-1}_{Q,\xi}\circ\tilde{\psi}_{Q,\xi}$ we see that this is a an isomorphism $\psi_{Q,\xi}:A_Q\otimes_{F_Q}F_\xi\rightarrow A_\xi$ such that 

\[
	\begin{tikzcd}
L_Q\otimes_{F_Q}F_\xi \arrow{r}{\phi_{Q,\xi}} \arrow[swap]{d}{\iota_Q\otimes id} & L_\xi \arrow{d}{\iota_\xi} \\
A_Q\otimes_{F_Q}F_\xi \arrow{r}{\psi_{Q,\xi}} & A_\xi
	\end{tikzcd}
\] 

Since the proof of the existence of $\tilde{\psi}_{V,\xi}$ is identical, we now conclude by patching that there exists a central simple $F$-algebra, $A$, containing $L$ as a maximal commutative subalgebra, with the property that $A\otimes_F F_\xi\cong A_\xi$ for every $\xi\in I$ in a manner compatible with the maps $\psi_{Q,\xi}$ and $\psi_{V,\xi}$ for every $\xi=(Q,V)\in\mathscr{B}$.\\

\noindent\textit{Patched Algebra is Division}\\

We observe that $A$ is in fact a division algebra over $F$. This is because the degree of $A$ is equal to the index of $A$. We see this by noting that for $\xi\in\{P'_2, P'_3, \dots, P'_l, P\}$, we have $\text{ind}(A_\xi)=n/(G:H_\xi)$. This is because over these points $A_\xi$ is constructed as a matrix algebra over a division algebra $D_\xi$ and the degree of $D_\xi$ is equal to the degree of its maximal subfield $E_\xi$ (as an extension of $F_\xi$) which is $n/(G:H_\xi)$. In the set $\{H_\xi:\xi\in\{P'_2, P'_3, \dots, P'_l, P\}\}$ we have a Sylow subgroup of $G$ for every prime divisor of $n=\vert G\vert$ so $n=\text{lcm}\{n/(G:H_\xi):\xi\in\{P'_2, P'_3, \dots, P'_l, P\}\}$. We also know that $\text{ind}(A_\xi)\mid\text{ind}(A)$ (Proposition 13.4 (iv) in \parencite{pierce1982associative}) and that $\text{ind}(A)\mid\text{deg}(A)$. Taken all together this shows that \[\text{deg}(A)=n=\text{lcm}\{\text{ind}(A_\xi):\xi\in\{P'_2, P'_3, \dots, P'_l, P\}\}\mid\text{ind}(A)\mid\text{deg}(A).\] 

Therefore it must be that $\text{deg}(A)=\text{ind}(A)$ so $A$ is a division algebra over $F$. This in turn implies that $L$, as a commutative subalgebra of a finite dimensional division algebra, is actually a domain and therefore a field. Since $L$ is a maximal commutative subalgebra it must be a maximal subfield of $A$, and as it is $G$-Galois over $F$, we conclude that $G$ is $F$-admissible.

\end{proof}

\printbibliography
\end{document}